\renewcommand{\fnum@figure}{Figure \thefigure}
\newtheorem{teo}{Theorem}
\newtheorem{lem}{Lemma}[section]
\newtheorem{prop}[lem]{Proposition}
\newtheorem{cor}{Corollary}[lem]
\newtheorem{defi}[lem]{Definition}
\theoremstyle{remark}
\newtheorem{rem}{Remark}[teo]
\newtheorem{reml}[cor]{Remark}
\newtheorem{q}{Question}
\def\C{\mathbb{C}}
\def\c{\mathcal{c}}
\def\e{\mathcal{E}}
\def\R{\mathbb{R}}
\def\Q{\mathbb{Q}}
\def\Z{\mathbb{Z}}
\def\A{\mathbb{A}}
\def\P{\mathbb{P}}
\def\F{\mathbb{F}}
\def\h{\mathbb{H}}
\def\H{\mathcal{H}}
\def\U{\mathcal{U}}
\def\I{\mathcal{I}}
\def\Ni{\text{Ni}}
\def\PGL{\text{PGL}}
\def\PSL{\text{PSL}}
\def\Gal{\text{Gal}}
\def\gq{G_{\Q}}
\def\gk{G_{K}}
\def\Cl{\text{Cl}}
\def\Aut{\text{Aut}}
\title{The arithmetic of critical values I: equicritical\\quartic polynomials}
\author{Francesco Naccarato}
\address{Department of Mathematics, ETH Zürich, Zürich CH-8092, Switzerland.}
\email{francesco.naccarato@math.ethz.ch}
\keywords{Critical values, ramification, Hurwitz spaces, modular curves, elliptic curves}
\begin{document}

\begin{abstract}
    A polynomial $f$ of degree $d$ and coefficients in an algebraically closed field $k$ defines a morphism $f\colon\P^1_k\longrightarrow\P^1_k$ which, if char$(k)\nmid d$, is unramified outside a finite set of points in the image: the critical values of $f$. In this work we establish a rigorous framework for the study of their arithmetic, which we carry out for $d=4$ and $k=\overline{\Q}$, uncovering a connection to the arithmetic of elliptic curves. Recent progress in the theory of Weyl sums has sparked some interest in finding pairs of polynomials having the same critical values for ``nontrivial" reasons: building on our analysis, we provide a complete classification of such pairs in the case of quartics over number fields.
\end{abstract}

\maketitle

\section{Introduction}\label{1}

Let $k$ be an algebraically closed field, $d\ge2$ an integer such that char$(k)$ does not divide $d$, and let $f\in k[x]$ be a polynomial of degree $d$. The (finite) \textit{critical values} of $f$, denoted by $C_f$, are those $y\in k$ such that the equation $f(x)=y$ has less than $d$ distinct solutions---that is, the branching points of the ramified cover $f\colon\A^1_k\longrightarrow\A^1_k$ induced by $f$. They are obtained by evaluating $f$ at its \textit{critical points} $Z_f$, which are the roots of the derivative $f'$. These notions account for multiplicity, so $Z_f$ and $C_f$ are in principle multisets, and they are clearly finite. By compactifying, we can extend the definition of critical values to rational functions and, more generally, to ramified covers of $\P^1=\P^1_{k}$ (to which we will refer just as ``covers"):
\begin{defi}
    Let $X/k$ be a projective curve and let $\phi\colon X\longrightarrow\P^1$ be a morphism of degree $d$. The critical values of $\phi$ are its branching points, denoted by $\text{Branch}(\phi)$. Two morphisms $\phi_1,\phi_2\colon X\longrightarrow\P^1$ with the same critical values are called \normalfont{equicritical.}
\end{defi}
\noindent
Notice how this makes $\infty\in\P^1$ a critical value of all polynomials. 
There is a natural action of $\Aut_{k}(X)$ on the set of covers $\phi\colon X\longrightarrow\P^1$, given by $\gamma\cdot \phi = \phi\circ\gamma^{-1}$; critical values behave well under this action as we clearly have $\text{Branch}(\gamma\cdot \phi)=\text{Branch}(\phi)$. Of special interest for us is the case $X=\P^1$, with $\PGL_2(k)$ acting via fractional linear transformations. Observe that the subset of covers consisting of polynomials, which will be our focus, is preserved under this action precisely by the subgroup Aff$(k)$ of affine transformations. Following Arnold \cite{a}, we define:
\begin{defi}
    Two polynomials $f,g\in k[x]$ are said to be \textbf{topologically equivalent} if there exists $\lambda\in\text{Aff}(k)$ such that $f=g\circ\lambda$.
\end{defi}
\noindent
We refer to the equivalence classes of this relation as \textit{topological types,} adopting the notation $[f]$ for the topological type of $f$; if, moreover, $f\in K[x]$ for a subfield $K\subset k$, we also set $[f]_{K}=\{f\circ\lambda, \ \lambda\in\text{Aff}(K)\}$, referring to any such set as a \textit{$K$-topological type}. So, polynomials of the same topological type have the same critical values. In the following, when we refer to two polynomials as (in)equivalent, we mean with respect to topological equivalence. We also call a polynomial $f$ \textit{normalized} if it is monic and $f(0)=0$ (the normalization with $f'$ monic is usually chosen in the literature, see e.g$.$ \cite{BCN}: our choice is motivated by purely expository reasons and does not affect the nature of the results).

The study of critical values turns out to be crucial in understanding various phenomena, ranging from holomorphic dynamics \cite{ldm} to the inverse Galois problem \cite{har} to, somewhat surprisingly, exponential sums: while investigating polynomial value sets over finite fields and rings, Kowalski and Soundararajan formulated \cite[\S 1.2]{ks} a Fourier-analogue to the Davenport pair problem (see e.g$.$ \cite[\S 2]{Fr2} for an overview of this classical problem), observing that two integral polynomials whose reductions modulo a prime $p$ are equicritical (and have all critical points in $\F_p$) give rise to a \textit{Fourier-Davenport pair} over $\Z/p^2\Z$. They then point out that there are inequivalent such pairs in any degree at least $4$. It is then natural to ask (see \cref{5.3} for the application) if there exist inequivalent polynomials with the same global critical values, and not just modulo some prime. 

We saw that critical values are naturally attached to a topological type, rather than just a single polynomial; indeed, we can speak of equicritical topological types in the same way. Observe that $\text{Aff}(k)$ acts on the set of topological types of polynomials over $k$ by \begin{align}
    [f]^{\lambda}=[\lambda(f)],\label{outa}
\end{align}with the resulting topological type having finite critical values $\lambda(C_f)$. This yields an action of $\text{Aff}(K)$ on the set of $K$-topological types which, when char$(k)=0$, is generically free (for complex quartics there is only one exception, see Corollary \ref{fixp}). So, in this case, varying $\lambda\in\text{Aff}(K)$ allows us to produce infinitely many pairs of distinct equicritical $K$-topological types starting from a single one. In light of these remarks, we give the following definition:
\begin{defi}
    Given a subfield $K\subset k$, we set:\begin{align*}
        EC_d(K)=\{([f],[g])\colon f,g\in K[x], \ \deg f=\deg g=d, \ [f]\neq[g] \text{ and } \ C_f=C_g\}/\sim,
    \end{align*} where $\sim$ is the above $\text{Aff}(K)$-action taken component-wise. A \textbf{basis} for $EC_d(K)$ is a collection of polynomial pairs $(f,g)$ such that the pairs $([f],[g])$ form a set of representatives for $EC_d(K)$; a \textbf{quasi-basis} is a collection that can be extended to a basis by adding finitely many pairs.\label{basis}
\end{defi}
\begin{reml}
    One can slightly modify the definition of $\text{EC}_d(K)$ by replacing topological types with $K$-topological types, but not much changes in characteristic $0$: see Remark \ref{rtt}.
\end{reml}
\noindent
One may then ask the following:
\begin{q}[Equicriticality problem]
    What can we say about $EC_d(K)$ for given $K$ and $d$? When is it empty, and when is it an infinite set?
\end{q}
\noindent
This work deals with the case $d=4$ over number fields: we give, for any number field $K$, an explicit parametrization of a basis for $\text{EC}_4(K)$, showing in particular that this set is infinite.

Given an ordered pair $\c=(f,g)$ of polynomials, we let $\overline{c}$ be the pair $(g,f)$ and, for $\lambda\in \text{Aff}(k)$, we let $\lambda(\c)$ be the pair $(\lambda(f),\lambda(g))$. Set $\rho=1+\sqrt3, \ C=-720\sqrt3-1248, \ R=362+209\sqrt3$ and, for an algebraic number $\kappa$ of degree $2$, let $\overline\kappa$ be the other root of its minimal polynomial. Moreover, let $\omega\in\overline{\Q}$ be a primitive third root of unity and, for a number field $K$, define $K'=K\setminus\{\omega,\omega^2\}$ and $K''=K'\setminus\{0,1,-2,-2\omega,-2\omega^2,\rho,\overline{\rho},\omega\rho,\omega^2\rho,\omega\overline{\rho},\omega^2\overline{\rho}\}$. We can now state our main result: 
\begin{teo}\label{mainthm}
    Let $K$ be a number field. The collection $\e_K=(\c_t=(f_t,g_t), \ t\in K'\cup\{\infty\})$ given by:   
    \begin{align}\label{generic param}
    \c_t=\biggl(x^4-6tx^2-8x, \ -\frac{(t-1)^2}{3}\left(x^4-6\frac{t+2}{t-1}x^2-8x\right)-8(t^2+t+1)\biggl)\text{ for }t\in K'',\end{align}
    \begin{align*}\begin{split}
    \c_0&=\left(x^4-x, \ -\frac{1}{48}x^4-\frac{1}{4}x^2+\frac{1}{6}x-\frac{1}{2}\right)=\overline{c_{-2}},\\ 
    &\c_1=\biggl(x^4+2x^3, \ -\frac13x^4-2x^3-3x^2\biggl)=\overline{c_{\infty}}
    \end{split}
        \end{align*}
    and
    \vspace{-3mm}
    
    \begin{itemize}
        \item if $\sqrt3\in K$, \begin{align*}
            c_{\rho}=(f_{\rho}, \ -f_{\rho}+2C), \ c_{\overline{\rho}}=(f_{\overline{\rho}}, -f_{\overline{\rho}}+2\overline{C}),
        \end{align*}where $f_{\rho}(x)=x^4-6\rho^3x^2-8\rho^3x$ and $f_{\overline{\rho}}(x)=x^4-6\overline{\rho}^3x^2-8\overline{\rho}^3x$;
        \vspace{1mm}
        
        \item if $\omega\in K$, \begin{align*}
            \c_{-2\omega}=(g_0,\omega g_0)=\overline{\c_{-2\omega^2}};
        \end{align*}
        \item if $\sqrt3, \ \omega\in K$, \begin{align*}
            \c_{\omega\rho}=(f_{\rho}, \ iR(f_{\overline{\rho}}-\overline{C}))=\overline{\c_{\omega\overline{\rho}}}, \ \c_{\omega^2\rho}=(f_{\rho}, \ iR(-f_{\overline{\rho}}+\overline{C}))=\overline{\c_{\omega^2\overline{\rho}}},
        \end{align*}
    \end{itemize}
    is a basis for $\text{EC}_4(K)$. Moreover, quadruples of equicritical, pairwise inequivalent quartics over $K$ exist if and only if $\omega\in K$, in which case \eqref{generic param} produces the set of ordered pairs from such quadruples.
\end{teo}

\begin{rem}
    While it will follow from our proof, we can immediately check that the polynomials in a pair appearing in Theorem \ref{mainthm} are inequivalent: indeed, since (for $t\neq1,\infty$) the coefficients of $x^3$ are all $0$, a linear change of variable relating $f_t$ and $g_t$ would have to be a homothety, which is easily ruled out by looking at the other coefficients.
\end{rem}

Kuhn \cite[Satz 2]{ku} (then generalized by Kristiansen \cite[Theorem 1]{kri}) has shown that, for each ordered tuple $(y_1,...,y_{d-1})$ of real numbers such that $(-1)^{j+1}y_j+(-1)^jy_{j+1}>0$ for $j\in\{1,...,d-2\}$, there exists exactly one $\R$-topological type $[f]$ with real critical points $Z_f=\{x_1<...<x_{d-1}\}$ such that $f(x_j)=y_j$. Observe how, if we drop the alternating sign condition on the differences $y_{j+1}-y_j$, choosing $K=\Q$ in Theorem \ref{mainthm} gives an infinite family of counterexamples to the uniqueness part: indeed, one easily verifies that for $t>1$, $f_t$ and $g_t$ have real critical points. In other words, two inequivalent rational quartics with real critical points can share the same critical value set, but not the same critical ordered triple $(f(x_1),f(x_2),f(x_3))$, once the critical points $x_1<x_2<x_3$ are labeled in increasing order. Concerning the existence part, it is key that for $\R$ the absolute Galois group $\mathrm{Gal}(\C/\R)\simeq\Z/2\Z$ is small, so there is no hope for such a result to hold over number fields. Still, a natural question one can ask is:
\begin{q}
    Given a number field $K$, classify the critical value sets of degree $d$ polynomials over $K$.
\end{q} Call a three-element subset $S\subset\overline\Q$ \textit{critical over $K$} if there exists a quartic $f\in K[x]$ with $C_f=S$. The following result settles this question for $d=4$:
\begin{teo}
    Let $K$ be a number field and let $y_1,y_2,y_3$ be distinct algebraic numbers such that the elliptic curve $E:z^2=(y-y_1)(y-y_2)(y-y_3)$ satisfies $j(E)\neq1728$. The set $\{y_1,y_2,y_3\}$ is critical over $K$ if and only if the above model for $E$ is defined over $K$ and $$j(E)=\frac{(u+3)^3(u+27)}{u}$$ for some $u\in K^*$. If $j(E)=1728,$ $\{y_1,y_2,y_3\}$ is critical over $K$ if and only if $E$ is a $K$-quadratic twist of $E_{\rho}:z^2=y^3+(1+2\rho)y$ or of $E_{\bar\rho}:z^2=y^3+(1+2\bar\rho)y$.\label{th2}
\end{teo}

\begin{rem}
    The statement of Theorem \ref{th2} suggests that it is useful to interpret the critical values of a quartic as the $2$-torsion of an elliptic curve: we will see why this is the case in \cref{3.1}. The theorem actually provides a characterization of the curves that arise this way; still, many questions about these curves and their connection to critical values remain worth asking: for instance, one may be interested in classifying the integral $j$-invariants that are realized for $K=\Q$, in which case an elementary $p$-adic argument shows that only $u=\pm 3^k, \ 0\le k\le6$, work. Additional questions and applications, especially in light of the constructions of \cref{4.2}, will be the subject of a separate upcoming work of the author.
\end{rem}

The paper is structured as follows: in \cref{2} we review the literature on critical values of polynomials, setting up the algebro-geometric picture of the problem; in doing so, we prove a generalization of a result of Arnold, which may bear some independent interest. A key ingredient in the proof of Theorem \ref{mainthm} is a thorough study of the (reduced) Hurwitz space $\H_4$ associated to quartic covers of $\P^1$ with monodromy of polynomial type: in \cref{3}, after introducing Hurwitz spaces, we construct the aforementioned reduced space for any degree $d\ge3$, and we study its geometry and arithmetic. \cref{4} is centered around $\H_4$: we prove that it is a certain well-known modular curve, and use this to establish Theorem \ref{th2}. \cref{5} is devoted to the equicriticality problem for quartics over number fields: we first give a nonconstructive proof---which may adapt to similar questions---for the existence of a parametrization like that of Theorem \ref{mainthm}, and then go on to compute the explicit equations for the polynomials appearing in the statement, by exploiting the rich structure arising from the link to the arithmetic of elliptic curves.

\subsection{Acknowledgments}

I am grateful to my advisor Emmanuel Kowalski, for recommending this topic and for the helpful advice he offered during the development of this work; to Davide Lombardo, for working out the details of the proof of Lemma \ref{qiso} and for his suggestion of following an explicit approach to the construction of the Hurwitz space; to Jordan Ellenberg, for sketching the ideas behind the fiber product formulation of the equicriticality problem and for pointing me to the paper of Rubin and Silverberg on elliptic curves with constant mod $p$ representations. Finally, I thank the anonymous referees for their many helpful suggestions.

\noindent
The author was partially supported by the Swiss National Science Foundation (grant number 219220). This work is part of ongoing research within the scope of the author's PhD thesis. 

\section{Critical values of polynomials: arithmetic and geometry}\label{2}
From now on, we assume that either char$(k)=0$ or char$(k)>d$. Given a normalized polynomial $f\in k[x]$, we attach to it the multiset of its critical points, as a (closed) point $\{x_i\in Z_f\}\in\A^{d-1}/S_{d-1}$. Since with any point $\widehat{x}\in\A^{d-1}/S_{d-1}$ we can associate the normalized formal integral (always assumed to have $0$ constant term) of $\prod_{\widehat{x}=\{x_i\}}(x-x_i)$ multiplied by $d$, we get a bijection:
\begin{align}\begin{split}
\{\text{normalized polynomials } f\in k[x]\text{ of degr}&\text{ee} \ d\}
\overset{1:1}{\longleftrightarrow}
\A^{d-1}/S_{d-1}\\ 
 &f \ \ \ \ \ \longmapsto \ \ \ \ \ \ Z_f.    
\end{split}\label{bij}
\end{align}
\noindent
If one does not care about the subfield of definition of the coefficients, the study of polynomials with given critical values can be carried out just for normalized polynomials: the leading coefficient of $f(ax+b)$ is $a^d$ times that of $f$, and its constant term is $f(b)$. Therefore, in light of \eqref{bij}, the starting point for our analysis will be the relation between critical points and critical values. We will then recover information about fields of definition via the study of rational points on Hurwitz spaces. 

If the normalized polynomial $f$ corresponds to $\widehat x=\{x_1,...,x_{d-1}\}\in\A^{d-1}/S_{d-1}$, its finite critical values $y_1,...,y_{d-1}$ are given by:
\begin{align}
    y_j=\sum_{i=0}^{d-1}(-1)^i\frac{d}{d-i}e_i(\widehat x)x_j^{d-i}=:\theta_d(x_1,...,x_{d-1})_j,\label{var}
\end{align}
where $e_i(\widehat x)$ is the elementary symmetric function of degree $i$ in $d-1$ variables (and $e_0\equiv1$). This translates the study of critical values for polynomials of degree $d$ to that of the covering $\theta_d\colon\A^{d-1}\longrightarrow\A^{d-1}$ defined by \eqref{var}, whose fiber above a point $(y_1,...,y_{d-1})$ of $\A^{d-1}$ corresponds under \eqref{bij} (and the quotient) to the set of normalized polynomials of degree $d$ having as finite critical values its coordinates. An elementary but important observation to make is that, for any subfield $K\subset k,$ the finite critical values of $f\in K[x]$, taken with their multiplicity, are the (only) roots of a polynomial defined over $K$: this can be seen directly by letting $\gk=\text{Gal}(\overline{K}/K)$ act on both sides of \eqref{var}, since $\theta_d$ is defined over $K$. 

Let $U_d$ be the affine open set $\{\prod_{1\le i < j\le d-1}(y_i-y_j)\neq 0\}\subset\A^{d-1}$ and let $V_d\subset U_d$ be the closed subscheme given by $y_1=0$. Polynomials with finite critical values in $U_d$---that is, pairwise distinct---are called \textit{Morse polynomials}. Arnold \cite[p.5, Theorem]{a}, following Davis \cite{d}, Thom \cite{t} and Zdravkovska \cite{z}, shows that for $d\ge3$ there are $d^{d-3}$ topological types of complex Morse polynomials of degree $d$ with given critical values. We generalize his result to our fields $k$ which, we recall, are algebraically closed and have characteristic either $0$ or larger than $d$:
\begin{prop}\label{dd3}
     Let $d\ge3$. For any $(y_1,...,y_{d-1})\in U_d$ we have $\#\{[f]\colon f\in k[x], \ C_f=\{y_1,...,y_{d-1}\}\}=d^{d-3}$.
\end{prop}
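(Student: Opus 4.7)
The strategy is to count the fiber of $\theta_d$ over a Morse point via algebraic intersection theory, then pass from normalized polynomials to topological types. A first reduction: the assignment $[f]\mapsto[f+c]$ is a well-defined bijection on topological types shifting critical values by $c$, so choosing $c\in k\setminus\{-y_1,\dots,-y_{d-1}\}$ (possible since $k$ is infinite) moves us into the open subset of $U_d$ where no critical value vanishes and preserves the count; we may thus assume $0\notin\{y_1,\dots,y_{d-1}\}$.

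For the degree, from \eqref{var} each component of $\theta_d$ is homogeneous of degree $d$ in $(x_1,\dots,x_{d-1})$, so Bezout's theorem applied to the $d-1$ homogenized hypersurfaces in $\mathbb{P}^{d-1}$ gives a generic fiber of length at most $d^{d-1}$; intersections at infinity would force every $y_j=0$, hence every critical point to be at least a double root of $f$, contradicting $2(d-1)>d$ for $d\ge 3$. So $\deg\theta_d=d^{d-1}$. For étaleness, using $\partial y_j/\partial x_\ell=-d\int_0^{x_j}\prod_{m\neq\ell}(t-x_m)\,dt$, one sees that at $x_i=x_j$ both rows $i,j$ and both columns $i,j$ of the Jacobian coincide (the column coincidence follows because $\prod_{m\neq i}(t-x_m)=\prod_{m\neq j}(t-x_m)$ when $x_i=x_j$), giving vanishing of order $\ge 2$ on each diagonal, while at $x_i=0$ the $i$-th row vanishes identically. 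Degree matching together with dominance of $\theta_d$ then yield
\begin{align*}
\det J_{\theta_d}=c_d\cdot\prod_i x_i\cdot\prod_{i<j}(x_i-x_j)^2
\end{align*}
for some $c_d\in k^\times$. Distinct critical values imply distinct critical points, and no vanishing critical value implies no critical point at zero (else $f(0)=0$ would give $0\in C_f$), so $\theta_d$ is étale on the fiber, which thus contains $d^{d-1}$ distinct normalized polynomials.

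Finally, for normalized $f$ with $0\notin C_f$, the set $f^{-1}(0)$ has $d$ distinct elements, and the normalized representatives of $[f]$ are the polynomials $f\circ\lambda$ with $\lambda(x)=ax+b$, $a^d=1$, $b\in f^{-1}(0)$---a total of $d^2$ distinct polynomials, since any Morse polynomial of degree $\ge 3$ has trivial affine stabilizer (an affine map fixing $d-1\ge 2$ distinct critical points must be the identity). Dividing gives $d^{d-1}/d^2=d^{d-3}$ topological types. The main technical hurdle is the clean factorization of the Jacobian determinant; everything else reduces to Bezout and finite-group bookkeeping.
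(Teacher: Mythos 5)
Your overall strategy is the same as the paper's---Bezout for the fiber of $\theta_d$, \'etaleness of $\theta_d$ on that fiber to see it is reduced, then division by the $d^2$ normalized representatives of each topological type---but two of your moves differ, one for the better and one with a genuine gap. The reduction to $0\notin\{y_1,\dots,y_{d-1}\}$ by translating critical values is a clean simplification: the paper instead treats the locus $V_d$ of a vanishing critical value separately, introducing a modified map $\widetilde\theta_d$, reproving a Jacobian lemma for it, and adjusting the count to $(d-1)d^{d-2}/(d(d-1))$ because $f$ then has a double root; your shift makes all of that unnecessary. Your final stabilizer bookkeeping is also correct, though you should say explicitly that an element of the affine stabilizer fixes each critical point \emph{individually} (rather than merely permuting them) because the critical values are distinct.

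The gap is in the factorization of $\det J_{\theta_d}$. The inference ``rows $i,j$ and columns $i,j$ both coincide at $x_i=x_j$, hence the determinant vanishes to order $\ge2$'' is false as a piece of linear algebra: the matrix $\begin{pmatrix}1+u&1\\1&1\end{pmatrix}$ has equal rows and equal columns at $u=0$ but determinant $u$. What rescues the claim is that swapping $x_i\leftrightarrow x_j$ conjugates $J_{\theta_d}$ by the corresponding permutation matrix (this is the true content of your row/column observation), so $\det J_{\theta_d}$ is a \emph{symmetric} polynomial in $x_1,\dots,x_{d-1}$; being symmetric and divisible by each $x_i-x_j$ (from the row coincidence), it is divisible by the Vandermonde, and the quotient is then antisymmetric, hence divisible by the Vandermonde again---giving the square. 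With that repair, and granting dominance (which you invoke but should note follows from, say, invertibility of $J_{\theta_d}$ at a single point, which is what the paper's Lemma \ref{jac} actually establishes directly), the degree count closes and \'etaleness follows. A second, smaller lacuna: excluding points at infinity via ``$2(d-1)>d$'' presupposes that the coordinates of such a point are pairwise distinct. If they coincide, a critical point of multiplicity $m$ in $f'$ is a root of $f$ of multiplicity $m+1$, so the correct count is $(d-1)+s\le d$ with $s$ the number of distinct critical points; this forces $s=1$, hence $f=(x-a)^d$ after normalization, hence $a=0$ and the putative point is the origin rather than a point at infinity. This degenerate case is exactly what the paper's argument (a polynomial with $C_h=\{0\}$ must be $x^d$) is designed to catch.
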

\begin{reml}
    Proposition \ref{dd3} shows that, in characteristic different from $2$ and $3$, the equicriticality problem is trivial for cubic polynomials: we have $EC_3(k)=\emptyset$.
\end{reml}
\noindent
Arnold's proof reduces the question to that of counting trees on $d$ vertices, which had already been solved by Cayley. Our strategy builds on Beardon, Carne and Ng's study \cite[Lemma 2.4]{BCN}, over $\C$, of the Jacobian $J_d$ of $\theta_d$ above nonzero distinct critical values, extending it to our fields $k$ and to the case where one of the critical values is $0$ (for a generalization of their study in the case of equal but nonzero critical values, see \cite[Theorem B]{DM}). Let the morphism $\widetilde\theta_d$ be given by \eqref{var} except for the equation for $y_1$, where the right-hand side is divided by $x_1$.
\begin{lem}
\begin{enumerate}[(i)]
    \item The morphism $\theta_d$ restricted to $\theta_d^{-1}(U_d\setminus V_d)$ is étale;
    \item the morphism $\widetilde\theta_d$\ restricted to $\widetilde\theta_d^{-1}(V_d)$ is étale.
\end{enumerate}\label{jac}
\end{lem}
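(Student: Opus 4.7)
The plan is to reduce both assertions to an explicit Jacobian computation. Using $y_j = f(x_j)$ and $f'(x_j)=0$, the chain rule gives $\partial y_j/\partial x_k = (\partial f/\partial x_k)(x_j)$ for all $j,k$ (the potential extra term at $k=j$ vanishes thanks to $f'(x_j)=0$). Since $f'(x) = d\prod_i(x-x_i)$ and $f(0)=0$, differentiating under the integral sign yields
\[ \frac{\partial f}{\partial x_k}(x) = -d\int_0^x \prod_{i\ne k}(t-x_i)\,dt =: Q_k(x), \]
so the Jacobian of $\theta_d$ is the matrix $J_d = (Q_k(x_j))_{j,k}$.

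I would then factor $J_d = VB$, where $V = (x_j^{\,l})_{j,l=1,\ldots,d-1}$ is the Vandermonde-type matrix encoding evaluation on polynomials of degree $\le d-1$ vanishing at $0$, and $B$ records the coefficients of the $Q_k$'s. Directly $\det V = (\prod_j x_j)\prod_{i<j}(x_j-x_i)$. For $\det B$ I would use the companion matrix $(q_k(x_j))_{j,k}$ with $q_k(t)=\prod_{i\ne k}(t-x_i)$: since $q_k(x_j)=0$ for $j\ne k$, this matrix is diagonal with entries $\prod_{i\ne k}(x_k-x_i)$, and it factors analogously as an ordinary Vandermonde times the signed coefficient matrix of the $q_k$'s, from which $\det B$ is extracted as another Vandermonde up to a nonzero combinatorial constant $C_d$ depending only on $d$ (invertible by our hypothesis on $\mathrm{char}(k)$). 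Combining,
\[ \det J_d = C_d \prod_j x_j \cdot \prod_{i<j}(x_i - x_j)^2. \]
Since $y_j = f(x_j)$ and $f(0)=0$, the vanishing of $x_j$ forces $y_j = 0$ and the vanishing of $x_i - x_j$ forces $y_i = y_j$, so $\det J_d \neq 0$ on $\theta_d^{-1}(U_d\setminus V_d)$, proving (i).

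For (ii), $\widetilde y_1 := f(x_1)/x_1$ is a polynomial in $x_1,\ldots,x_{d-1}$ because $f(0)=0$. On $\{x_1\ne 0\}$, using $y_1 = x_1\widetilde y_1$ one sees that the first row of the Jacobian of $\widetilde\theta_d$ equals the first row of $J_d$ divided by $x_1$, minus a correction proportional to $\widetilde y_1$ which vanishes identically on $V_d$. Hence
\[ \det \widetilde J_d = C_d\prod_{j\ge 2}x_j \cdot \prod_{i<j}(x_i - x_j)^2 \]
on $V_d\cap\{x_1\ne 0\}$, and the identity extends to $x_1 = 0$ by polynomial continuation. On $\widetilde\theta_d^{-1}(V_d)$, the condition $y_1=0$ combined with the distinctness of the $y_j$'s forces $y_2,\ldots,y_{d-1}$ nonzero, hence $x_2,\ldots,x_{d-1}$ nonzero and pairwise distinct; $x_1$ may still vanish (precisely when it equals the root of $f$ at the origin), but the factored form above is nonvanishing regardless, yielding (ii).

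The main technical step is the clean factorization of $\det J_d$; the $d^{d-3}$-count of Morse polynomials (recovered in Proposition \ref{dd3} via the generic degree of $\theta_d/S_{d-1}$) provides an independent consistency check. If the direct computation proves cumbersome, an alternative is to quote the complex-analytic result of Beardon--Carne--Ng and base-change to our $k$, which is legitimate because only small integer constants appear.
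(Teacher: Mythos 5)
Your treatment of (i) is correct and is essentially an explicit, self-contained version of what the paper imports from Beardon--Carne--Ng: the factorization $J_d=VB$ with $\det V=\prod_jx_j\prod_{i<j}(x_j-x_i)$ and $\det B$ a unit multiple (under the standing hypothesis on char$(k)$) of another Vandermonde yields $\det J_d=C_d\prod_jx_j\prod_{i<j}(x_i-x_j)^2$, which is nonzero wherever the critical values are distinct and nonzero. (Like the paper, you are implicitly reading $U_d\setminus V_d$ as the locus where \emph{all} $y_i$ are nonzero rather than just $y_1$; this symmetrized reading is clearly the intended one, so I will not press the point.)

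Part (ii), however, has a genuine gap at exactly the case the lemma exists to handle. Your formula $\det\widetilde J_d=C_d\prod_{j\ge2}x_j\prod_{i<j}(x_i-x_j)^2$ is derived only on $\{\widetilde y_1=0\}\cap\{x_1\neq0\}$, and the asserted ``polynomial continuation'' to $x_1=0$ is invalid: since $f(0)=0$ and the coefficient of $x$ in $f$ is a multiple of $e_{d-1}=\prod_jx_j$, the polynomial $\widetilde y_1=f(x_1)/x_1$ vanishes identically on the hyperplane $\{x_1=0\}$, i.e.\ $x_1\mid\widetilde y_1$ (equivalently, $y_1$ vanishes to order $2$ along $x_1=0$). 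Hence $\{x_1=0\}$ is a separate irreducible component of $\{\widetilde y_1=0\}$, not contained in the closure of $\{\widetilde y_1=0,\ x_1\neq0\}$, so an identity proved on the latter does not propagate to it. The formula is in fact false there: for $d=3$ one computes $\det\widetilde J_3=-\tfrac94x_2^3$ at $x_1=0$, while $C_3x_2(x_1-x_2)^2|_{x_1=0}=-\tfrac92x_2^3$; the factor of $2$ reflects that $y_1\sim ax_1^2$ near $x_1=0$, so $\partial_{x_1}(y_1/x_1)=\tfrac1{2x_1}\partial_{x_1}y_1$ there rather than $\tfrac1{x_1}\partial_{x_1}y_1$. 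The points with $x_1=0$ genuinely lie in $\widetilde\theta_d^{-1}(V_d)$ (they correspond to the double root of $f$ sitting at the origin and are the reason $\widetilde\theta_d$ was introduced), so they cannot be dismissed. The conclusion is still true, but it requires a separate argument at $x_1=0$, which is precisely the extra work the paper does: it shows $\widetilde J_{(1,1)}=(-1)^{d-1}\tfrac d2\prod_{j\ge2}x_j\neq0$ while $\widetilde J_{(1,j)}=0$ for $j\ge2$, and then runs a column-independence argument. To repair your proof in the same computational spirit, note that at $x_1=0$ the first row of $\widetilde J_d$ is $(\widetilde J_{(1,1)},0,\dots,0)$, so $\det\widetilde J_d$ reduces to that entry times the $(1,1)$ minor of $J_d$, which can be analyzed by the same $VB$-type factorization restricted to $x_1=0$.
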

\begin{proof}
\begin{enumerate}[(i)]
    \item Let $Q\in U_d\setminus V_d$, $P\in\theta_d^{-1}(Q)$ and set $J=J_d$ for convenience. Let $I_d\subset k[x_1,...,x_{d-1}]$ be the ideal generated by the polynomials defining $\theta_d(x_1,...,x_{d-1})=Q$, and set $X_d=\text{Spec}(k[x_1,...,x_{d-1}]/I_d)$. The part of the proof of Lemma 2.4 in \cite{BCN} where the authors show invertibility of $J$ at $P$ carries over to the fields we are considering, since it only relies on integral identities that hold formally in this case and on a polynomial of degree $\le d$ with $d+1$ roots being identically $0$. Therefore, the cotangent space of $X_d$ at $P$ has dimension $(d-1)-\text{rk}(J(P))=0$, so in particular it must be equal to the local dimension of $X_d$ at $P$, and the Jacobian criterion (which we can apply even in positive characteristic since $k=\overline{k}$) tells us that $\theta_d$ is smooth at $P.$  Clearly the local dimension at $P$ is an upper bound for the relative dimension at $P$ and hence, since smooth morphisms of relative dimension $0$ are étale, we get the desired claim by arbitrariness of $P$ and $Q$.
    \item Let now $Q=(y_1,...,y_{d-1})\in V_d$, $P=(x_1,...,x_{d-1})\in\widetilde\theta_d^{-1}(Q)$ and set $\widetilde J=\text{Jac}_{\widetilde\theta_d}$. We want to show independence of the columns of $\widetilde J(P),$ from which the claim will follow as in (i). Suppose on the contrary that there exist $\lambda_1,...,\lambda_{d-1}\in k$ such that $\sum_{j=1}^{d-1}\lambda_j\widetilde J_{ij}(P)=0$ for $1\le i\le d-1$. Let us start by assuming $x_1\neq0$; then, respectively from the first equation in the proof of Lemma 2.4 in \cite{BCN} and from the definition of $\widetilde\theta_d$ (our different normalization for $\theta_d$ by a factor of $d$ does not impact these equations), we get: \begin{align}
        \int_0^{x_i}\sum_{j=1}^{d-1}\lambda_j\prod_{\substack{1\le k\le d-1\\ k\neq j}}(w-x_k) \ dw=0, \ \ \ \ \ \ 2\le i\le d-1\label{int}
    \end{align}and\begin{align}
        x_1\int_{0}^{x_1}\sum_{j=1}^{d-1}\lambda_j\prod_{\substack{1\le k\le d-1\\ k\neq j}}(w-x_k) \ dw - \lambda_1\int_{0}^{x_1}\prod_{1\le k\le d-1}(w-x_k) \ dw=0.\label{int2}
    \end{align}
    Now, observe that the second integral in \eqref{int2} multiplied by $d$ is just the value at $x_1$ of the normalized polynomial with critical points $x_1,...,x_{d-1}$, so none other than $y_1=0$. Therefore, denoting by $f_{\lambda}(w)$ the common integrand left in both equations, we see that the polynomial $F_{\lambda}(x)=\int_0^xf_{\lambda}(w) dw$ has degree at most $d-1$ and vanishes for $x=0,x_1,...,x_{d-1}$, and hence vanishes identically. But then so does its derivative $f_{\lambda}$; on the other hand, for any $1\le j\le d-1$, we find $f_{\lambda}(x_j)=\lambda_j\prod_{\substack{1\le k\le d-1\\ k\neq j}}(x_j-x_k)$: since $P\in U_d,$ this means $\lambda_j=0,$ and we are done.
    \noindent
    
    If $x_1=0$, \eqref{int} still holds, so $F_{\lambda}(x)=\int_0^xf_{\lambda}(w) dw$ vanishes at $0,x_2,...,x_{d-1}$; therefore, if it does not vanish identically---in which case we conclude as above---we must have \begin{align}
        F_{\lambda}(x)=Cx\prod_{2\le j\le d-1}(x-x_j)\label{nv}
    \end{align} for some $C\in k^*$. On the other hand, a simple direct computation with \eqref{var} yields $\widetilde J(P)_{(1,1)}=(-1)^{d-1}d\frac{\prod_{2\le j\le d-1}x_j}{2}\neq0,$ while $\widetilde J(P)_{(1,j)}=0$ for $2\le j\le d-1$ as $x_1$ appears in each monomial of these partial derivatives. This immediately gives $\lambda_1=0$. But then, we find:\begin{align*}
        0\overset{\eqref{int}}{=}f_{\lambda}(x_1)\overset{x_1=0}{=}F'_{\lambda}(0)\overset{\eqref{nv}}{=}(-1)^{d-2}C\prod_{2\le j\le d-1}x_j\neq0,
    \end{align*}
    which yields the desired contradiction.
\end{enumerate}
\end{proof}
\noindent
We are now ready to prove Proposition \ref{dd3}:
\begin{proof}[Proof of Proposition \ref{dd3}]
As $\theta_d$ is étale over $U_d\setminus V_d$ by Lemma \ref{jac} (i), it is unramified with finite fibers. Applying Bezout's Theorem, we find that the cardinality of the fiber above $Q\in U_d\setminus V_d$ equals $d^{d-1}$ minus the number of points at infinity of \eqref{var}; as its right-hand side is homogeneous, the points at infinity are found by letting it vanish for $1\le j\le d-1$, and hence correspond to the \textit{nonzero} vectors of critical points of normalized polynomials having all finite critical values $0$. But a polynomial $h$ with $C_h=\{0\}$ (as a multiset) must vanish at all roots of $h'$, necessarily with higher multiplicity, which implies that $h'$---and hence $h$---have a single root, of multiplicity equal to the degree. Therefore, if $h$ is normalized then we must have $h(x)=x^d$, showing that there are no intersection points at infinity: we have $|\theta_d^{-1}(Q)|=d^{d-1}$. Now, any normalized polynomial $f$ in our fiber is equivalent to $d^2$ normalized polynomials in the same fiber, given by $f(\mu_d x+b)$ for $\mu_d$ any $d$-th root of unity in $k$ and $b$ any of the $d$ distinct roots of $f$. Therefore, we are left with $d^{d-3}$ topological types over each point of $U_d\setminus V_d$. 

\noindent
Let now $Q\in V_d$, and assume that the entry of $Q$ that is $0$ is the first one (we can safely do so, since all our equations are symmetric). If $f\in\theta_d^{-1}(Q),$ then one of the critical values of $f$ is $0$, so $f$ has a common root with $f'$ and hence a double root---exactly one, since the critical values are distinct. Therefore, in computing the number of topological types in each fiber, we need to divide its cardinality by $d(d-1)$ instead of $d^2$. On the other hand, in virtue of how $\widetilde\theta_d$ is defined, the preimage of $Q$ under $\theta_d$ has the same number of elements (without multiplicity) as the preimage of $Q$ under $\widetilde\theta_d$, since the first equation of \eqref{var} has the form $f_1(x_1,...,x_{d-1})=0$ with $x_1^2\mid f_1$. But Lemma \ref{jac} (ii) tells us that $\widetilde\theta_d$ is étale over $V_d$, so by the same argument as above it has fibers of cardinality $(d-1)d^{d-2},$ and we are done.
\end{proof}

\begin{reml}
    As kindly pointed out by a referee, a quicker way of deducing Proposition \ref{dd3} is from the fact that the underlying Hurwitz space and its map to the configuration space can be defined over $\Z[\frac{1}{d!}$], so the degree of this map over any field of characteristic larger than $d$ is the same as its degree over $\C$.
\end{reml}

\section{The Hurwitz space of Morse polynomials}\label{3}

Hurwitz spaces are, roughly speaking, moduli spaces of covers $X\longrightarrow\P^1$ with given ramification properties. We assume for simplicity that the field of definition $k$ of our covers is $\C$ (the following constructions generalize to any algebraically closed field of characteristic $0$, but the nice topological setting offered by Riemann surfaces is missing; see \cite[\S 1.1]{Ca} for an account), keeping in mind that compact Riemann surfaces come from algebraic curves by Serre's GAGA. So, let $\P^1=\P^1_{\C}$ and consider all objects as base-changed to $\C$ when necessary. Moreover, let $\U_r$ be the moduli space for $r$ distinct unordered points in $\P^1$ equipped with its natural algebraic structure of affine variety over $\Q$, whereby the field of definition of the point $(y_1,...,y_r)$ is that of the polynomial $\prod_{y_i\neq\infty}(y-y_i)$.

\subsection{Moduli spaces of $G$-covers}\label{3.1}

We begin by giving an outline of the constructions in Hurwitz theory that are relevant for our study; a more comprehensive introduction can be found, e.g., in the work of Fried and Völklein \cite{FV}. Let $r$ and $d$ be positive integers, let $G$ be a subgroup of $S_d$ and let $C=(C_1,...,C_r)$ be an unordered tuple of conjugacy classes in $G$. We will refer to such a triple $(r,G,C)$ as a \textit{datum,} where the embedding of $G$ in $S_d$ is implicit.
\begin{defi}\label{ncl}
    A \textbf{Nielsen class} for the triple $(r,G,C)$ is an element of \begin{align*}
        \Ni_r(G,C)=\{(g_1,...,g_r)\in G^r\colon \exists\sigma\in S_r\colon g_i\in C_{\sigma(i)} \ \forall i=1,...,r, \ g_1\cdot...\cdot g_r=1 \text{ and }\langle\{g_i\}\rangle=G\}.
    \end{align*} An \textbf{inner Nielsen class} is an element of $Ni_r(G,C)^{\text{in}}=Ni(G,C)/G$, where $G$ acts component-wise by conjugation.\\
    An \textbf{absolute Nielsen class} is an element of $Ni_r(G,C)^{\text{abs}}=Ni(G,C)/N_{S_d}(G)$, where the normalizer acts component-wise by conjugation.
\end{defi}

Recall that a $G$-cover is a pair $(\phi\colon X\longrightarrow\P^1,\delta)$---usually denoted just by $\phi$---where $\phi$ is a Galois cover and $\delta\colon\Aut(\phi)\longrightarrow G$ is a group isomorphism. Let $\phi$ have $r_{\phi}=r$ critical values $y_1,...,y_r$, let $y_0\in\P^1$ be any other point and, for $1\le i\le r$, let $U_i$ be a simply-connected open neighborhood of $y_i$ that does not contain any other critical value but contains $y_0$. Then, taking a generator $\rho_i$ of $\pi_1(U_i\setminus\{y_i\},y_0)$ with positive orientation and lifting it to $X$ gives a deck transformation $g_i\in G$ of $X$, via the isomorphism $\delta$. Let $\Cl(g_i)$ be the respective conjugacy class in $G$ and denote by $C(\phi)$ the unordered tuple $(\Cl(g_1),...,\Cl(g_r))$: we say that $\phi$ branches with local monodromy $C(\phi)$; this is a good definition since picking different, positively-oriented generators for the fundamental groups only changes the induced deck transformations by inner automorphisms of $\Aut(\phi)$. We have just described a way to attach to a $G$-cover $\phi$ a datum, namely $(r_{\phi},\Aut(\phi),C(\phi)),$ and by choosing the $\rho_i$'s so that $\prod_{i=1}^r\rho_i=1$ under the inclusions $\pi_1(U_i\setminus\{y_i\},y_0)\hookrightarrow\pi_1(\P^1\setminus\{y_1,...,y_r\},y_0), \ 1\le i\le r$, we have also attached to $\phi$ the Nielsen class $(g_1,...,g_r).$

As in the case of polynomials, it is more natural to attach a datum to a suitable equivalence class of $G$-covers: we generalize the definition of topological equivalence by saying that two $G$-covers $(\phi:X\longrightarrow\P^1, \ \delta)$ and $(\psi:Y\longrightarrow\P^1, \ \xi)$ are equivalent, or isomorphic, if there is an isomorphism of covering spaces $\gamma:X\longrightarrow Y$ that commutes with the respective actions of $G$, i.e. $\delta(g)=\xi(\gamma g\gamma^{-1}) \ \forall g\in\Aut(\phi)$. We adopt the terminology \textit{$G$-cover class} and the notation $(\phi)$ for the equivalence class of $\phi$ under this relation. Isomorphic covers clearly have the same critical values, while the additional condition on the $G$-actions implies that the datum associated to a $G$-cover is an invariant of its class. Hence, we have a (noncanonical, due to its dependence on the choice of generators $\rho_i$) map
\begin{align}
\Phi:
\left\lbrace
\begin{aligned}
\displaystyle
\text{classes of $G$-covers with datum } \\ \text{$(r,G,C)$ branching at $y_1,...,y_r$}
\end{aligned}
\right\rbrace
\longrightarrow \ \ 
\text{Ni}_r(G,C)^{\text{in}},   \label{nc}
\end{align}
\noindent
sending $(\phi)$ to the inner Nielsen class representative of $(g_1,...,g_r)$ as defined above. The following result (see \cite[\S 1.2]{Fr1} and \cite[\S 2.2.1]{Fr2}) shows that $\Phi$ is a bijection: 
\begin{teo}[Riemann Existence Theorem, inner version]\label{ret}
    Given a datum $(r,G,C)$ and $r$ points $y_1,...,y_r$ in $\P^1$, $Ni(G,C)^{\text{in}}$ is in bijection with the set of classes of $G$-covers $\phi:X\longrightarrow\P^1$ that branch over $y_1,...,y_r$ with local monodromy $C$.
\end{teo}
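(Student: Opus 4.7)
The plan is to factor the stated correspondence into two well-known steps: topological Galois covers of the punctured sphere $U = \P^1\setminus\{Q_1,\dots,Q_r\}$ correspond to quotients of $\pi_1(U,Q_0)$, and such quotients are in turn parametrized by Nielsen tuples; then Grauert--Remmert lets one pass from the analytic to the algebraic category.

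First I would fix a basepoint $Q_0\in U$ and a standard bouquet of loops $\rho_1,\dots,\rho_r$ around the punctures, chosen so that $\prod_{i=1}^r\rho_i=1$ as done in the paragraph preceding \eqref{nc}. Van Kampen then gives the presentation $\pi_1(U,Q_0)=\langle\rho_1,\dots,\rho_r\mid\rho_1\cdots\rho_r=1\rangle$. By the topological Galois correspondence, connected Galois covers of $U$ with a fixed identification of the deck group with $G$ are in bijection with surjections $\psi\colon\pi_1(U,Q_0)\twoheadrightarrow G$; two such covers are $G$-equivariantly isomorphic exactly when the two surjections differ by post-composition with $\text{Inn}(G)$, because changing $\delta$ by conjugation by $g\in G$ (which is what an equivariant isomorphism can do) amounts to replacing $\psi$ by $g\psi(\cdot)g^{-1}$.

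Second, a surjection $\psi\colon\pi_1(U,Q_0)\to G$ is specified uniquely by the tuple $(g_1,\dots,g_r)=(\psi(\rho_1),\dots,\psi(\rho_r))$, and the relation $\rho_1\cdots\rho_r=1$ together with surjectivity are precisely $g_1\cdots g_r=1$ and $\langle g_i\rangle=G$. The local monodromy around $Q_i$ is by construction the conjugacy class of $g_i$ in $G$, so demanding that the unordered tuple of local monodromies equal $C=(C_1,\dots,C_r)$ is equivalent to the existence of $\sigma\in S_r$ with $g_i\in C_{\sigma(i)}$. These are exactly the defining conditions of $\Ni_r(G,C)$; quotienting by the $\Inn(G)$-action recovers $\Ni_r(G,C)^{\text{in}}$. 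Combining the two steps yields the desired bijection on the level of topological/analytic covers of $U$.

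Finally, I would upgrade to algebraic covers of $\P^1$: the Grauert--Remmert theorem (or, equivalently, Riemann's extension theorem together with Serre's GAGA) asserts that every finite étale analytic cover of $U$ extends uniquely to a finite analytic map of compact Riemann surfaces $\phi\colon X\to\P^1$ ramified exactly over $\{Q_1,\dots,Q_r\}$, which in turn corresponds to an algebraic morphism of smooth projective curves; the $G$-action extends uniquely as well, giving a $G$-cover in the sense of the text. This is compatible with the equivalence relations on both sides, so the analytic bijection descends to the claimed algebraic one.

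The main obstacle is the bookkeeping around equivalences rather than any deep input: one must verify that the $\text{Inn}(G)$-ambiguity in the surjection $\psi$ corresponds exactly to $G$-equivariant isomorphism of marked covers, and that the freedom to permute the conjugacy classes in $C$ (because $C$ is unordered) is consistent with the freedom of choosing different standard generators $\rho_i$ around the $Q_i$. Both are formal checks, but they are where the statement acquires its precise content.
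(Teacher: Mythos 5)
The paper does not actually prove this statement: it is quoted as a classical result, with references to Fried (\citep[1.2]{Fr1}, \citep[2.2.1]{Fr2}), so there is no internal argument to compare against. Your sketch is the standard proof underlying those references --- van Kampen's presentation of $\pi_1(\P^1\setminus\{Q_1,\dots,Q_r\},Q_0)$, the Galois correspondence for covers of the punctured sphere matching $\Inn(G)$-classes of surjections with $G$-cover classes, and Grauert--Remmert/GAGA to compactify and algebraize --- and it is correct in outline. The two points you flag as bookkeeping are indeed the only delicate ones: changing the base point in the fiber over $Q_0$ (equivalently, composing with a $G$-equivariant isomorphism) conjugates the monodromy surjection by an element of $G$, which is exactly the quotient defining $\Ni_r(G,C)^{\text{in}}$, and a different choice of bouquet only permutes and conjugates the $g_i$, which is absorbed by the unorderedness of $C$; one should also note that since the classes in $C$ are taken nontrivial, the cover genuinely branches at each $Q_i$, and that the resulting bijection is noncanonical in the $\rho_i$, as the paper itself remarks before \eqref{nc}.
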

\begin{rem}
    Here the genus of $X$ is uniquely determined by the datum: this is the Branch Cycle Lemma, see \cite[\S 5.1]{Fr3}.\label{bcl}
\end{rem}
Notice how in Theorem \ref{ret} the branching set is fixed, while in the situation that interests us it varies. The Riemann Existence Theorem for families (see again \cite[\S 1]{Fr1}) enables us to construct Hurwitz spaces as parameter families of Nielsen classes over $\U_r$: in particular (see \cite[Theorem 1.1]{Ca}),
the \textbf{inner Hurwitz space} \begin{align}\label{inhs}
    \Psi_r\colon H_r^{\text{in}}(G,C)\longrightarrow \U_r
\end{align}
has the property that for any $Q=(y_1,...,y_r)\in\U_r$, the fiber above $Q$ is in bijection with the set of $G$-cover classes that branch at $Q$ with local monodromy $C$. Moreover, it admits a \textit{unique} algebraic model as an affine variety over a number field $k_0=k_0(C)$ such that $\Psi_r$ is defined over $k_0$ and, for any algebraic extension $K/k_0$, any $\sigma\in\gk$ and any $(\phi)\in H_r^{\text{in}}(G,C)(\overline{\Q})$, we have: \begin{align}
    ^{\sigma}(\phi)=(^{\sigma}\phi).\label{galc}
\end{align}

As mentioned at the end of \cref{1}, our focus will mainly lie on a \textit{reduced} Hurwitz space. Observe that there is an \textit{outer} $\PGL_2(\C)$-action on $G$-cover classes, given by $(\phi)^{\alpha}=(\alpha(\phi))$. We say that two $G$-cover classes in the same orbit are \textit{$\PGL_2(\C)$-equivalent,} and refer to an equivalence class as a \textit{$G/\PGL_2$-cover class}. Then, given the same datum as above, the reduced Hurwitz space $H_r^{\text{red}}(G,C)$ is the affine variety of dimension $r-3$ obtained as the quotient of $H_r^{\text{in}}(G,C)$ by this action. Indeed, as we already observed in \eqref{outa} for polynomials and affine transformations, we have $\text{Branch}(\alpha(f))=\alpha(\text{Branch}(f)),$ and local monodromy around critical values is clearly preserved by the action. This gives a commutative diagram:
    
    \begin{center}
        \begin{tikzpicture}[>=stealth,->,node distance=3cm]
  \node (X) at (0,3) {$H_r^{\text{in}}(G,C)$};
  \node (Y) at (3,3) {$H_r^{\text{red}}(G,C)$};
  \node (P1) at (0,0) {$\U_r$};
  \node (P2) at (3,0) {$\I_r$};

  \draw[->] (X) to node[above] {$\Pi_r$} (Y);
  \draw[->] (X) to node[left] {$\Psi_r$} (P1);
  \draw[->] (Y) to node[right] {$\beta_r$} (P2);
  \draw[->] (P1) to node[above] {$\pi_r$} (P2);
\end{tikzpicture}
    \end{center}
where $\I_r=\U_r/\PGL_2(\C)$ and the maps $\Psi_r,\Pi_r$ and $\beta_r$ depend on the datum. A classical result of Mumford and Fogarty \cite[Theorem 1.1]{mf} shows that both quotient spaces are well-defined in the category of affine $k_0$-varieties. In virtue of its construction, $K$-rational points on $H_r^{\text{red}}(G,C)$ correspond with $\PGL_2(\C)$-orbits of $G$-cover classes $(\phi)$ (with $\phi$ taken to be defined over $\overline{K}$) such that for each $\sigma\in\gk$ there exist $\alpha_{\sigma}\in\PGL_2(\C)$ and an isomorphism $\gamma_{\sigma}$ of $G$-covers from $\alpha_{\sigma}(^{\sigma}\phi)$ to $\phi$.

If $r=4$ then $\I_4$ is the usual moduli space of elliptic curves $Y(1)$, since the $j$-invariant of an elliptic curve $E$ equals \begin{align}j(\lambda)=256\frac{(\lambda^2-\lambda+1)^3}{\lambda^2(\lambda-1)^2},\label{jquad}\end{align} with $\{0,1,\lambda,\infty\}$ a representative in the $\PGL_2(\C)$-orbit of the $x$-coordinates of $E[2]$. We will hence from now on refer to the \textit{$j$-invariant of a quadruple of distinct complex numbers} without this causing any confusion. A fiber of the morphism  \begin{align*}
    \beta_4\colon H_4^{\text{red}}(G,C)\longrightarrow Y(1)
\end{align*} is then naturally identified with the set of $G/\PGL_2$-cover classes with monodromy $C$ and representatives that branch over the $2$-torsions of the elliptic curves with $j$-invariant at the base. We will refer to the latter as the \textit{critical $j$-invariant} of a $G$- or $G/\PGL_2$-cover class.

By construction, $\beta_4$ only ramifies above critical $j$-invariants of quadruples with ``large" stabilizer in $\PGL_2(\C)$, i.e$.$ the elliptic $j$-invariants $0$ and $1728.$ It is then well known that $H_r^{\text{red}}(G,C)$ must be a quotient of the upper half-plane $\h$ by a finite index subgroup $\Gamma=\Gamma(G,C)$ of the modular group $\PSL_2(\Z)$, with $\beta_4$ induced by the inclusion $\Gamma<\PSL_2(\Z)$. In other words, we are dealing with a (not necessarily congruence) modular curve. See again \cite[\S 3]{Fr1} for a general overview of reduced Hurwitz spaces of dimension $1$ as modular curves.

\subsection{Lifting points on Hurwitz spaces, and the case of polynomials}

Given $x\in H_r^{\text{red}}(G,C)$, we call any of its preimages in $H_r^{\text{in}}(G,C)(K)$ a ``lift" of $x$. The question of which points $x\in H_r^{\text{red}}(G,C)(K)$ lift to $H_r^{\text{in}}(G,C)(K)$ is a rich and open one, whose main application is in the domain of the inverse Galois problem: indeed, the IGP reduces to finding rational points on inner Hurwitz spaces, thanks to Hilbert's Irreducibility Theorem. Looking for $K$-rational points on $H_r^{\text{red}}(G,C)$ is easier due to it being a rational quotient of lower dimension, especially in the case $r=4$ when it is a curve. However, these points rarely lift to points of $H_r^{\text{in}}(G,C)(K)$; this behavior is largely controlled by two factors:\begin{enumerate}[(a)]
    \item the lifting of points from $\I_r(K)$ to $\U_r(\overline{K})$;
    \item the \textit{base invariant} of the $G/\PGL_2(\C)$-cover class $x$, i.e. the conjugacy class in $\PGL_2(\C)$ of the stabilizer of any of its lifts to $H_r^{\text{in}}(G,C)$.
\end{enumerate} 
In \cite{Ca}, Cadoret gives a thorough account of the theory and proves some strong lifting results, especially for $G/\PGL_2$-cover classes with nontrivial base invariant: see for instance \cite[Lemma 3.12, (2)]{Ca}. In point (1) of the same result, the author observes that, for $r=4$, a point in $\I_r(K)$ always lifts to a point in $\U_r(K)$, as this is equivalent to fields of moduli being fields of definition for elliptic curves. Notice that a stronger condition than $x$ having trivial base invariant is the $\PGL_2(\C)$-stabilizer of the branching set of any (i.e$.$ all) of its lifts being trivial. For our purposes, it is enough to prove the following lifting result:
\begin{lem}
    For any datum $(r,G,C)$, any tower of algebraic extensions $K_0/K/k_0(C)$ and any $x\in H_r^{\text{red}}(G,C)(K)$, if:\vspace{-2mm} \begin{enumerate}[(1)] \item $\beta_r(x)$ lifts to a point in $\U_r(K_0)$;
        \item $\text{Stab}_{\PGL_2(\C)}(\text{Branch}(\phi_0))=\{I\}$ for some lift $(\phi_0)$ of $x$,
    \end{enumerate}\vspace{-2mm}
    then there exists a $G$-cover $\phi:X\longrightarrow\P^1$ such that $(\phi)\in H_r^{\text{in}}(G,C)(K_0)$ and $\Pi_r((\phi))=x$.\label{redlift}
\end{lem}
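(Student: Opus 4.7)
The plan is to perform a direct Galois descent argument in which hypothesis (2) kills all the outer twisting that would otherwise arise. First, by hypothesis (1), I would pick a $K_0$-rational lift $B_0\in\U_r(K_0)$ of $\beta_r(x)$. Since $\text{Branch}(\phi_0)$ is another lift of $\beta_r(x)$ via $\pi_r$ and $\pi_r$ is the $\PGL_2(\C)$-quotient of $\U_r$, there exists $\alpha\in\PGL_2(\C)$ with $\alpha(\text{Branch}(\phi_0))=B_0$. I would then replace $\phi_0$ by $\phi_0':=\alpha(\phi_0)$, which is still a lift of $x$ since $\PGL_2(\C)$-equivalent $G$-cover classes define the same point of the reduced Hurwitz space; conjugation invariance of stabilizers shows that the trivial-stabilizer condition is preserved, so we now have $\text{Stab}_{\PGL_2(\C)}(B_0)=\{I\}$ with $\text{Branch}(\phi_0')=B_0$.

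Next, since $x\in H_r^{\text{red}}(G,C)(K)\subseteq H_r^{\text{red}}(G,C)(K_0)$, the characterization of rational points on the reduced Hurwitz space recalled just before the statement provides, for every $\sigma\in G_{K_0}$, an element $\alpha_\sigma\in\PGL_2(\C)$ together with an isomorphism $\gamma_\sigma\colon\alpha_\sigma({}^\sigma\phi_0')\to\phi_0'$ of $G$-covers. Comparing branch loci of the two sides gives $\alpha_\sigma({}^\sigma B_0)=B_0$, and because $B_0$ is already $K_0$-rational this simplifies to $\alpha_\sigma(B_0)=B_0$. Invoking the trivial-stabilizer hypothesis then forces $\alpha_\sigma=I$ for every $\sigma\in G_{K_0}$.

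It follows that $\gamma_\sigma\colon{}^\sigma\phi_0'\to\phi_0'$ is an isomorphism of $G$-covers for each $\sigma\in G_{K_0}$, i.e.\ $({}^\sigma\phi_0')=(\phi_0')$; by the Galois compatibility \eqref{galc} this is the same as ${}^\sigma(\phi_0')=(\phi_0')$, which exactly says that $(\phi_0')\in H_r^{\text{in}}(G,C)(K_0)$. Finally, $\Pi_r((\phi_0'))=\Pi_r((\phi_0))=x$ by construction, and the proof is complete. The main conceptual step—and the only one that requires any care—is the initial adjustment of $\phi_0$ to a representative whose branch locus is literally $B_0$ rather than merely $\PGL_2(\C)$-equivalent to it; once this is done, hypothesis (2) collapses what would normally be a nontrivial $\PGL_2(\C)$-valued cocycle condition into the pointwise equality $\alpha_\sigma=I$, bypassing the Galois-cohomological obstructions that appear in Cadoret's more general framework.
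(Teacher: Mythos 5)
Your proof is correct and follows essentially the same route as the paper: both arguments first arrange a representative $G$-cover whose branch locus is literally the chosen $K_0$-rational lift $Q$ of $\beta_r(x)$, then use the characterization of $K$-points on the reduced space to produce the pairs $(\alpha_\sigma,\gamma_\sigma)$, observe that $\alpha_\sigma$ must stabilize $Q={}^{\sigma}Q$, and invoke the triviality of that stabilizer (conjugate to the one in hypothesis (2)) to force $\alpha_\sigma=I$, concluding via \eqref{galc}. The only cosmetic difference is that you make the initial adjustment $\phi_0\mapsto\alpha(\phi_0)$ explicit, whereas the paper simply asserts the existence of a lift of $x$ lying over $Q$ by compatibility of the two $\PGL_2(\C)$-actions.
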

\begin{proof}
    Let $Q\in\U_r(K_0)$ be a lift of $\beta_r(x)$ and let $(\phi)$ be any $G$-cover class in $H_r^{\text{in}}(G,C)(\overline{\Q})$ such that $\Pi_r((\phi))=x$ and $\Psi_r((\phi))=Q$ (which exists as the $\PGL_2(\C)$-action on $G$-cover classes and that on critical values are compatible). We know that for each $\sigma\in\Gal(\overline{K_0}/K_0)$ there exist $\alpha_{\sigma}\in\PGL_2(\C)$ and an isomorphism of $G$-covers $\gamma_{\sigma}\in\Aut(X)$ from $\alpha_{\sigma}(^{\sigma}\phi)$ to $\phi$. Since $\text{Branch}(^{\sigma}\phi)= {^{\sigma}}Q=Q$, $\alpha_{\sigma}$ must lie in its stabilizer, which is conjugate to that of $\phi_0$ and hence trivial. Therefore, $\alpha_{\sigma}$ is the identity, implying that $^{\sigma}\phi=\phi(\gamma_{\sigma})$ and that $\gamma_{\sigma}$ commutes with the $G$-actions of $\phi$ and $^{\sigma}\phi$; in other words, $(\phi)\in H_r^{\text{in}}(G,C)(K_0)$ by \eqref{galc} and the paragraph preceding it.
\end{proof}

Let us now go back to polynomials. A Morse polynomial $f\in k[t]$ defines a cover $f\colon\P^1\longrightarrow\P^1$ of the same degree $d$, branching at $C_f\cup\{\infty\}$ with local monodromy $C_{\text{pol}}$ given by a transposition at each of the finite critical values and a $d-$cycle at $\infty$ (see \cite[\S 2.2.2]{Fr2}). It is a classical result going back to Hilbert that any choice of representatives for the classes in $C_{\text{pol}}$ whose product is the identity must generate the full symmetric group (see \cite[\S 4.4]{Serre1992}), so the Galois group of the cover $f(t)-y$ is $S_d$. 
In order to apply Hurwitz theory to our question, we set $r=d, \ G=S_d$ and $C=C_{\text{pol}}$. Then necessarily $X=\P^1$ (see Remark \ref{bcl}), and we get the inner Hurwitz space\begin{align*}
    \Psi^{\text{pol}}_d\colon H_d^{\text{in}}(S_d,C_{\text{pol}})\longrightarrow\U_d,
\end{align*}whose fiber above $\{y_1,...,y_d\}\in\U_d$ is in bijection with the set of classes of $S_d$-covers $\phi\colon\P^1\longrightarrow\P^1$ branching over the $y_i$'s with local monodromy $C_{\text{pol}}$. We call these covers \textit{polynomial $S_d$-covers.} Since $C_{\text{pol}}^m=C_{\text{pol}}$ for odd $m$, the results in \cite[\S 1.1]{FV} imply that our inner Hurwitz space is defined over $\Q$. 

Notice the similarity between $\Psi_d^{\text{pol}}$ and the map $\theta_d$ of Section 2: if we denote by $\iota_d$ the inclusion $U_d/S_{d-1}\hookrightarrow\U_d$ given by $\{y_1,...,y_{d-1}\}\mapsto\{y_1,...,y_{d-1},\infty\}$, and by $\mathcal{W}_d$ its image, then $\Psi_d^{\text{pol}}|_{\Psi_d^{-1}(\mathcal{W}_d)}$ and $\iota_d\circ\theta_d$ are both rational maps from an affine variety to $\mathcal{W}_d$ whose fibers convey information about the polynomials branching at the base. Nevertheless, it is not the \textit{same} information: $\Psi_d^{\text{pol}}$ accounts for all classes of covers with polynomial monodromy, not just (normalized) polynomials, and more importantly, it carries data about the field of definition of the covers by \eqref{galc} (see Theorem \ref{lifts}). While polynomials are not Galois covers themselves, we have:
\begin{lem}\label{prec}
    Polynomial $S_d$-cover classes with given branch locus $Q$ are in bijection with isomorphism classes of covers with branch locus $Q$ and local monodromy $C_{\text{pol}}$. Moreover, each such class is $\PGL_2(\C)$-equivalent to the isomorphism class of a polynomial.
\end{lem}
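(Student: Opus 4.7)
The proof naturally splits along the two assertions. For the bijection, I would invoke the Riemann Existence Theorem in both its inner and absolute formulations: fiberwise over a fixed branch locus $\{y_1,\dots,y_d\}$, the polynomial $S_d$-cover classes are parameterized by the inner Nielsen class set $\Ni_d(S_d,C_{\text{pol}})/S_d$, while isomorphism classes of mere covers with the same monodromy are parameterized by the absolute Nielsen class set $\Ni_d(S_d,C_{\text{pol}})/N_{S_d}(S_d)$. Since $G=S_d$ is embedded in itself, $N_{S_d}(S_d)=S_d$, so both quotients are by the same $S_d$-conjugation action and hence coincide. Concretely, the resulting bijection sends a polynomial $S_d$-cover $\phi\colon\tilde X\to\P^1$ to the quotient $\tilde X/S_{d-1}\to\P^1$ by the stabilizer of a symbol in the standard action of $S_d$ on $\{1,\dots,d\}$, and conversely sends a mere cover to its Galois closure equipped with the canonical $S_d$-action.

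For the second assertion, I would first verify that the source curve $X$ of any mere cover with our monodromy is $\P^1$. This follows from Riemann--Hurwitz: each of the $d-1$ transpositions has $d-1$ cycles in $\{1,\dots,d\}$ and so contributes $d-(d-1)=1$ to $\sum_p(e_p-1)$, while the $d$-cycle contributes $d-1$; thus $2g_X-2=-2d+(2d-2)=-2$, forcing $g_X=0$ (this is the Branch Cycle Lemma of Remark \ref{bcl} in our case). Next, the monodromy at the distinguished critical value $y_d$ is a single $d$-cycle, so its preimage under $\phi$ is a unique point $p\in\P^1$ with full ramification index $d$. Choosing $\alpha\in\PGL_2(\C)$ with $\alpha(y_d)=\infty$ and $\gamma\in\PGL_2(\C)=\Aut(\P^1)$ with $\gamma(\infty)=p$, the composition $\alpha\circ\phi\circ\gamma^{-1}\colon\P^1\to\P^1$ then fixes $\infty$ with full ramification, i.e. it is a polynomial of degree $d$.

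The main point requiring care is reconciling the two flavors of equivalence involved. Pre-composition with $\gamma^{-1}$ on the source does not alter the isomorphism class of the mere cover: indeed $\gamma$ itself realizes an isomorphism of covers from $\phi\circ\gamma^{-1}$ to $\phi$. On the other hand, post-composition with $\alpha$ on the target is exactly the outer $\PGL_2(\C)$-action defining the reduced Hurwitz space. Combining the two normalizations therefore exhibits a polynomial within the $\PGL_2(\C)$-orbit of the given mere cover, equivalently within the $\PGL_2(\C)$-orbit of the corresponding polynomial $S_d$-cover class under the bijection of the first part, completing the lemma.
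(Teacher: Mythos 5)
Your proof is correct and follows essentially the same route as the paper: for the first claim you identify inner and absolute Nielsen classes via $N_{S_d}(S_d)=S_d$, and for the second you use transitivity of $\PGL_2(\C)$ to move the $d$-cycle branch point to $\infty$, where total ramification over $\infty$ forces the cover to be a polynomial (your explicit Riemann--Hurwitz check that $X=\P^1$ is a welcome addition the paper delegates to the Branch Cycle Lemma). The only nit is that your normalization of $\gamma$ points the wrong way --- you need $\gamma(p)=\infty$ rather than $\gamma(\infty)=p$ for $\alpha\circ\phi\circ\gamma^{-1}$ to have its pole at $\infty$ --- but this is a trivial relabelling.
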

\begin{proof}
    A bijection analogous to \eqref{nc} holds for absolute Nielsen classes, in this case with isomorphism classes of (mere) covers with given datum and branch locus ($G$ being the automorphism group of the Galois closure): this gives rise \cite[Proposition 6.3]{Fr3} to the \textit{absolute Hurwitz space} relative to the datum. If $G=S_d$, inner and absolute Nielsen classes coincide in virtue of Definition \ref{ncl}, so in this case the inner and absolute Hurwitz spaces are isomorphic (see \cite[\S 3.1.3]{BF} for more details).

    Since $\PGL_2(\C)$ acts transitively on $\P^1,$ given $(\phi)\in H_d^{\text{in}}(S_d,C_{\text{pol}})$ (now taken as an isomorphism class of the mere covers, without any Galois-compatibility requirement) there is $\alpha\in\PGL_2(\C)$ such that the critical value where $(\phi)^{\alpha}$ has a $d$-cycle is $\infty$. But a map with a $d$-cycle at $\infty$ has a pole of order $d$ at its only preimage $t_1$, so it is a polynomial in a local coordinate with pole at $t_1$ like $(t-t_1)^{-1}$, and the second part of the claim follows.
\end{proof}
In other words, Lemma \ref{prec} tells us that once we pass to the reduced space we are, up to equivalences, dealing just with polynomials. The choice of not focusing directly on absolute Hurwitz spaces is motivated by our, albeit short, treatment of the issue of lifting rational points, which the literature usually covers in the inner case, in virtue of the connection to the inverse Galois problem.  

We refer to $\beta_d\colon H_d^{\text{red}}(S_d,C_{\text{pol}})\longrightarrow\I_d$ as the Hurwitz space of (Morse) polynomials of degree $d$. Let us compute its degree:
\begin{prop}\label{pol}
   Let $d\ge3$. Then $\beta_d$ has degree $d^{d-3}$, and it is unramified above the $\PGL_2(\C)$-orbits with generic stabilizer.
\end{prop}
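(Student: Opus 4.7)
The strategy is to count the fiber of $\beta_d$ above a generic $\PGL_2(\C)$-orbit by pulling back through $\Pi_d$ to a fiber of $\Psi_d^{\text{pol}}$ where Proposition \ref{dd3} applies. Fix $[Q]\in\I_d$ admitting a lift $Q\in\U_d$ with $\text{Stab}_{\PGL_2(\C)}(Q)=\{I\}$, which is the ``generic stabilizer'' condition. Since $\PGL_2(\C)$ acts transitively on $\P^1$, I may apply a Möbius transformation to assume $Q\in\mathcal{W}_d=\iota_d(U_d/S_{d-1})$, i.e., $Q=\{y_1,\dots,y_{d-1},\infty\}$ for some $(y_1,\dots,y_{d-1})\in U_d$.

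Next I count $(\Psi_d^{\text{pol}})^{-1}(Q)$ using Lemma \ref{prec}: this fiber is in bijection with isomorphism classes of mere covers $\phi\colon\P^1\to\P^1$ branching at $Q$ with local monodromy $C_{\text{pol}}$, and each such class has a polynomial representative (since the $d$-cycle monodromy is at $\infty$). Because two polynomials define isomorphic covers if and only if they differ by a $\PGL_2(\C)$-transformation fixing $\infty$, i.e., by an element of $\text{Aff}(\C)$, this set is precisely the set of topological types of polynomials with critical values $\{y_1,\dots,y_{d-1}\}$. By Proposition \ref{dd3}, its cardinality is $d^{d-3}$.

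To descend to $\beta_d$, note that $\beta_d^{-1}([Q])=(\Psi_d^{\text{pol}})^{-1}(\PGL_2(\C)\cdot Q)/\PGL_2(\C)$, and the triviality of $\text{Stab}_{\PGL_2(\C)}(Q)$ forces every $\PGL_2(\C)$-orbit in this preimage to meet $(\Psi_d^{\text{pol}})^{-1}(Q)$ in exactly one point: if $\alpha\cdot(\phi_1)=(\phi_2)$ with both branching over $Q$, then $\alpha\cdot Q=Q$ gives $\alpha=I$. Hence $|\beta_d^{-1}([Q])|=d^{d-3}$, yielding the asserted degree. Unramifiedness above this locus follows from the commutative square: $\Psi_d^{\text{pol}}$ is étale by the Riemann Existence Theorem for families, while both quotient morphisms $\pi_d$ and $\Pi_d$ are étale exactly where $\PGL_2(\C)$ acts freely, so $\beta_d$ inherits étaleness on the trivial-stabilizer locus. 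The main subtlety I expect lies in the second step: one must carefully track the dictionary between inner $S_d$-cover classes, mere cover classes of polynomial type, and $\text{Aff}(\C)$-equivalence classes of polynomials, and make sure the normalization conventions underlying Proposition \ref{dd3} are compatible with the Hurwitz-theoretic counting.
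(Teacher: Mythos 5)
Your argument breaks at its very first step in the one case the paper actually needs, namely $d=4$. You ``fix $[Q]\in\I_d$ admitting a lift $Q\in\U_d$ with $\text{Stab}_{\PGL_2(\C)}(Q)=\{I\}$'' and call this the generic situation, but for $d=4$ no such $Q$ exists: the stabilizer of \emph{any} four distinct points of $\P^1$ contains the Klein four-group of double transpositions (each pairing is realized by a Möbius transformation, by invariance of the cross-ratio), and the generic stabilizer is exactly this $V_4$ --- a fact the paper uses repeatedly (proof of Theorem \ref{lifts}\,(i), \cref{4.1}). Hence your descent step (``every $\PGL_2(\C)$-orbit meets $(\Psi_d^{\text{pol}})^{-1}(Q)$ in exactly one point'') and your unramifiedness argument (``$\pi_d$ and $\Pi_d$ are étale exactly where $\PGL_2(\C)$ acts freely'') rest on a hypothesis that is never satisfied in degree $4$; note also that for $d=3$ the stabilizer is all of $S_3$. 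The paper isolates the correct condition: what must be shown to be generically trivial is not the stabilizer of the branch locus $Q$ but the stabilizer of the topological type $(f)$ under the outer action. For $d=4$ this holds because a nontrivial element of the Klein group moves every point of $Q$, in particular $\infty$, so it sends a polynomial class to a class whose $4$-cycle sits over a finite critical value, which cannot again be a polynomial class; the $V_4$-action on the fiber is therefore free, one still gets $\deg\beta_4=4=4^{4-3}$, and ramification is confined to the elliptic orbits where the stabilizer jumps beyond $V_4$.

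A secondary issue: you identify $(\Psi_d^{\text{pol}})^{-1}(Q)$ with the set of topological types of polynomials with critical values $\{y_1,\dots,y_{d-1}\}$ via the parenthetical ``since the $d$-cycle monodromy is at $\infty$,'' but $C_{\text{pol}}$ is an \emph{unordered} tuple of conjugacy classes (Definition \ref{ncl}), so the fiber of $\Psi_d^{\text{pol}}$ over $Q$ also contains classes whose $d$-cycle lies over a finite branch point; Lemma \ref{prec} only guarantees that each such class becomes a polynomial class after applying some element of $\PGL_2(\C)$, not that it already is one. For $d=4$ this is ultimately harmless precisely because the Klein stabilizer permutes the four branch points transitively, so every orbit in $\beta_4^{-1}([Q])$ does contain a polynomial class branched at $Q$ itself; but as written your count of the fiber, and hence of the degree, is not justified. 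You should rebuild the proof around the freeness of the outer action on the classes $(f)$, as the paper does, rather than around a (nonexistent) trivial stabilizer of $Q$.
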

\begin{proof}
    By Lemma \ref{prec}, $\deg(\beta_d)$ is equal to the number of isomorphism classes of polynomials in a generic fiber. Since these clearly biject with topological types, there are $d^{d-3}$ of them above each $\{y_1,...,y_d\}\in\U_d$ by Proposition \ref{dd3}. Therefore, it suffices to show that the stabilizer for the $\PGL_2(\C)$-outer action of the class $(f)$ of a polynomial is generically trivial. 
    
    For $d=3$ there is nothing to prove (in particular, our Hurwitz space is a rational point). For $d>4$, the stabilizer $\text{Stab}_{\PGL_2(\C)}(\text{Branch}(f))$ is itself generically trivial, so we are done. For $d=4$, the former stabilizer is generically a Klein four-group, so each nontrivial element $\tau$ is a double transposition; in particular, the critical value $\infty$ is not fixed, so the $4$-cycle in the local monodromy of $\tau(f)$ is not at $\infty$, and therefore $\tau(f)\notin (f).$
\end{proof}

As we just observed, a point $x\in H_d^{\text{red}}(S_d,C_{\text{pol}})(\C)$ always lifts to a complex polynomial $f$: from now on, we will use the expression ``topological type" also for $(f)$, whenever this does not create ambiguity. In order to shed light on the equicriticality problem, we need to be able to relate the minimal field of definition of $f$ to that of $x$; our spaces have the following lifting property:
\begin{teo}Let $K$ be a number field;
\begin{enumerate}[(i)]
\vspace{-2mm}
    \item any $x\in  H_4^{\text{red}}(S_4,C_{\text{pol}})(K)$ lifts to a quartic defined over $K$, so $H_4^{\text{red}}(S_4,C_{\text{pol}})(K)$ is in bijection with the set of $\PGL_2(\C)$-orbits of topological types of quartics over $K$;
    \item if $d>4$, any $x\in  H_d^{\text{red}}(S_d,C_{\text{pol}})(K)$ in the generic fiber of $\beta_d$ lifts to a polynomial defined over a $K$-extension of degree at most $d!$.
\end{enumerate}\label{lifts}
\end{teo}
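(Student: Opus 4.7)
The plan is to derive both parts from Lemma \ref{redlift}, together with a cohomological descent step exploiting the vanishing of $H^1(G_L, \text{Aff}(\overline L))$ to pass from a polynomial $S_d$-cover class defined over a field $L$ to a polynomial representative defined over $L$ itself. This vanishing follows from Hilbert's Theorem 90 combined with the vanishing of $H^1(G_L, \mathbb{G}_a)$ via the split exact sequence $1 \to \mathbb{G}_a \to \text{Aff} \to \mathbb{G}_m \to 1$.

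For part (i), the paragraph preceding the theorem records that any $\beta_4(x) \in \I_4(K) = Y(1)(K)$ lifts to a $K$-rational point of $\U_4$, so hypothesis (1) of Lemma \ref{redlift} holds with $K_0 = K$. Hypothesis (2) is not automatic, since for a quartic branch locus the $\PGL_2(\C)$-stabilizer is generically the Klein four-group (and even larger at $j = 0, 1728$). The key observation is that the $4$-cycle of a polynomial cover must lie at the $\Q$-rational point $\infty$, so the element $\alpha_\sigma$ in the proof of Lemma \ref{redlift} is constrained to lie in $\text{Aff}(\C) \cap \text{Stab}_{\PGL_2(\C)}(\text{Branch}(f))$. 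At generic $j$-invariants this intersection is trivial (the non-identity elements of the Klein four-group are double transpositions not fixing $\infty$, as already used in the proof of Proposition \ref{pol}), giving $(f) \in H_4^{\text{in}}(K)$; the special values $j = 0, 1728$ are then handled by direct inspection of the associated finite subgroups of $\text{Aff}$, or by appealing to the more refined lifting of \citep[Lemma 3.12, (2)]{Ca}. Once $(f) \in H_4^{\text{in}}(K)$, the cocycle $\sigma \mapsto \gamma_\sigma \in \text{Aff}(\overline K)$ defined by ${}^\sigma f = f \circ \gamma_\sigma$ is trivialized by the vanishing of $H^1(G_K, \text{Aff}(\overline K))$, and we find $\mu \in \text{Aff}(\overline K)$ such that $f \circ \mu$ is defined over $K$.

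For part (ii), with $d > 4$ and $x$ in the generic fiber of $\beta_d$, the full $\PGL_2(\C)$-stabilizer of $\text{Branch}(\phi_0)$ is already trivial, so hypothesis (2) of Lemma \ref{redlift} holds directly. The main task is to lift $\beta_d(x) \in \I_d(K)$ to $\U_d(K_0)$ with $[K_0:K] \le d!$. To this end, introduce the intermediate moduli space $\tilde{\I}_d = \tilde{\U}_d / \PGL_2(\C)$ of $\PGL_2$-orbits of \emph{ordered} $d$-tuples, where $\tilde{\U}_d$ parametrizes ordered $d$-tuples of distinct points in $\P^1$: since $d \ge 3$, the map $\tilde{\U}_d \to \tilde{\I}_d$ admits the $\Q$-rational section $(u_4, \ldots, u_d) \mapsto (0, 1, \infty, u_4, \ldots, u_d)$, making it a trivial $\PGL_2$-torsor and ensuring that $K$-points of $\tilde{\I}_d$ lift to $K$-points of $\tilde{\U}_d$. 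Since $\I_d = \tilde{\I}_d / S_d$, the $K$-point $\beta_d(x)$ corresponds to a $G_K$-stable $S_d$-orbit in $\tilde{\I}_d(\overline K)$; the fixed field $K_0$ of the $G_K$-stabilizer of any chosen element of this orbit satisfies $[K_0:K] \le d!$ and produces a lift in $\tilde{\I}_d(K_0)$, hence in $\U_d(K_0)$ via the section and the forgetful map $\tilde{\U}_d \to \U_d$. Lemma \ref{redlift} then gives $(\phi) \in H_d^{\text{in}}(K_0)$, and the cohomological descent of part (i) applied with $K_0$ in place of $K$ yields a polynomial over $K_0$.

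The main obstacle is the bookkeeping needed to ensure, in part (i), that the Galois cocycle actually takes values in $\text{Aff}$ rather than in the a priori larger $\PGL_2(\C)$: the polynomial monodromy structure at $\infty$ is crucial here, and the two exceptional $j$-values require additional explicit handling. The bound $d!$ in part (ii) is almost certainly far from sharp (a finer analysis through the $\PGL_2$-torsor structure on the generic locus would likely reduce it to a Brauer obstruction), but the approach through $\tilde{\I}_d$ has the virtue of being transparent and suffices for the applications.
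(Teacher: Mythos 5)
Your treatment of the generic case of (i) and of (ii) is sound, but there is a genuine gap at the elliptic fibers of $\beta_4$, i.e. for $x$ with $\beta_4(x)\in\{0,1728\}$, which are part of the statement. Your mechanism for forcing $\alpha_\sigma=\mathrm{id}$ is that $\alpha_\sigma$ must lie in $\text{Aff}(\C)\cap\text{Stab}_{\PGL_2(\C)}(\text{Branch}(f))$ and that this intersection is trivial; that is exactly the paper's argument at nonelliptic $j_{CV}$, but the ``direct inspection'' you defer to goes the wrong way at the special values. For $j_{CV}=1728$ the stabilizer of the branch locus is dihedral of order $8$ and its affine part is generated by (a conjugate of) $z\mapsto -z$; for $j_{CV}=0$ it is $A_4$ with affine part of order $3$ generated by (a conjugate of) $z\mapsto\omega z$ --- and in the latter case the topological type of $p_0=x^4-x$ is genuinely fixed by this element (Corollary \ref{fixp}), so there is no hope of ruling out a nontrivial $\alpha_\sigma$ by a freeness argument either. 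Your fallback citation of \citep[Lemma 3.12, (2)]{Ca} is not obviously applicable as stated (it concerns classes with nontrivial base invariant, which is only one of the classes in the fiber over $j_{CV}=0$ and none over $j_{CV}=1728$). The paper closes these cases by an entirely different device, developed in \cref{4.2}: the bijection \eqref{cp} identifying $\H_4$ with the $j$-line of the \emph{critical points}, under which every $j\in K$ lifts to a $K$-rational quartic simply by integrating the Weierstrass polynomial of a $K$-rational elliptic curve with that $j$-invariant. Some substitute for this idea (or a genuinely worked-out case analysis of the cocycle valued in those finite affine stabilizers) is needed; as written your proof does not cover $\beta_4(x)\in\{0,1728\}$.

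Where your argument does apply, it is correct and in two places cleaner than the paper's. The final descent step via the vanishing of $H^1(G_K,\text{Aff}(\overline K))$ is a legitimate replacement for the paper's route (which invokes field-of-moduli $=$ field-of-definition for covers without automorphisms and then massages a general $\alpha\in\PGL_2(\overline\Q)$ into an affine one by an explicit matrix computation); note that the cocycle relation for $\sigma\mapsto\gamma_\sigma$ requires the uniqueness of $\gamma_\sigma$, which holds because a Morse polynomial cover has no nontrivial automorphisms. In (ii), your explicit construction of the lift of $\beta_d(x)$ to $\U_d(K_0)$ with $[K_0:K]\le d!$ via the section $(0,1,\infty,u_4,\dots,u_d)$ of the ordered configuration space is essentially an unpacking of the paper's citation of \citep[Lemma 3.10]{Ca}, and is fine; you should just add a word on why the resulting cover can be taken to be a polynomial over $K_0$ (the unique branch point with $d$-cycle monodromy is $G_{K_0}$-stable, hence $K_0$-rational, and can be moved to $\infty$ by a $K_0$-rational Möbius transformation).
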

\begin{proof}
    \begin{enumerate}[(i)]
        \item For the moment, we only prove the claim in the case of nonelliptic critical $j$-invariant, i.e$.$ $\beta_4(x)\notin\{0,1728\}$; the remaining cases will be dealt with in \cref{4}. The datum $(4,S_4,C_{\text{pol}})$ satisfies condition (1) in Lemma \ref{redlift} with $K_0=K$ by the aforementioned Lemma 3.12 in \cite{Ca}, but the stabilizers are Klein groups, not trivial. Let us keep the notation of the proof of Lemma \ref{redlift}, but with $f:=\phi$ taken as a polynomial; then, $\alpha_{\sigma}$ must lie in the stabilizer of $Q$, which is a Klein four-group by the nonellipticity hypothesis. Assume $\alpha_{\sigma}$ is not the identity; as we saw in the proof of Proposition \ref{pol}, its action on the critical values is free, and hence $^{\sigma}f$ would have a $2$-cycle at infinity, which is absurd. Therefore $\alpha_{\sigma}$ is the identity, so for any $\sigma\in\gk$ there exists $\gamma_{\sigma}\in\PGL_2(\overline{\Q})$ such that \begin{align}
            ^{\sigma}f=f\circ\gamma_{\sigma}.\label{fmod}
        \end{align} For mere covers of $\P^1$ without automorphisms, \eqref{fmod} implies that $K$ is a field of definition for $f$ (see the Introduction in \cite{DD}; it is \textit{a priori} only a field of moduli, see 2.2 in the same work for the precise definitions), and our covers indeed have no automorphisms since these biject with the center of the Galois group of the cover (see \cite[\S 1]{ch}), which in our case is $Z(S_d)=\{e\}$. Therefore, there is some $\alpha\in\PGL_2(\overline{\Q})$ such that \begin{align}\label{fdef}
        f\circ\alpha\in K(t).
    \end{align} 
    
    If $\alpha$ is affine we are done, so suppose $\alpha=\begin{pmatrix}a & b \\ c & d\end{pmatrix}, \ c\neq0$. Then $\alpha$ can be written as $\beta\alpha'$ with $\beta$ affine and $\alpha'$ having top-left entry $0$. Therefore, up to replacing $f$ with $f\circ\beta$, $\gamma_{\sigma}$ with $\beta^{-1}\gamma_{\sigma}{^\sigma\beta}$ and $\alpha$ with $\alpha'$, which does not affect \eqref{fmod} and \eqref{fdef}, we can take $a=0$, so say $\alpha=\begin{pmatrix}0 & 1 \\ c & d\end{pmatrix}$. Let us now combine \eqref{fmod} and \eqref{fdef}; for any $\sigma\in G_{K}$, we have \begin{align}\label{sides}
        f=(f\circ\alpha)\circ\alpha^{-1}=\ ^{\sigma}(f\circ\alpha)\circ\alpha^{-1}=f\circ(\gamma_{\sigma}({^{\sigma}\alpha})\alpha^{-1}),
    \end{align}
    which implies that the fractional linear transformation on the right-hand side is the identity, since it induces an automorphism of the cover $f(t)-y$.
    Therefore, since $\gamma_{\sigma}$ is affine by \eqref{fmod}, so must be $^{\sigma}\alpha\alpha^{-1}=\begin{pmatrix}1 & 0 \\ -d\frac{^{\sigma}c}{c}+{^{\sigma}}d & \frac{^{\sigma}c}{c}\end{pmatrix}$. 
    
    Looking at the bottom-left coefficient we get $^{\sigma}(\frac{d}{c})=\frac{d}{c}\ \forall \sigma\in G_{K}\Longrightarrow l=\frac{d}{c}\in K$, and hence $f\left(\frac{c^{-1}}{t+l}\right)\in K(t)$ by \eqref{fdef}.
    But now, as the denominator in the variable is $K$-rational itself, simply taking common denominator $(t+l)^4$ shows that the coefficients $a_i$ of $t^i$ in $f$ satisfy $a_ic^{-i}\in K$, so we finally get: \begin{align*}
        f\bigg(\frac{t}{c}\bigg)\in K[t].
    \end{align*}
        \item Since the generic stabilizer of a point in $\U_d$ is trivial for $d>4$, Lemma \ref{redlift} gives us a polynomial lift with field of moduli $K_0$ equal to the field of definition of any lift of $\beta_d(x)$, and by \cite[Lemma 3.10]{Ca} we can take $[K_0:K]\le d!$. The same argument as above gives a polynomial lift defined over $K_0$.
    \end{enumerate}
\end{proof}
\begin{rem}\label{rtt}
    The paragraph after \eqref{fmod} shows that, in defining $\text{EC}_d(K)$, we can generically just take $K$-topological types; that is, to obtain all rational polynomials equivalent to a given Morse rational polynomial $f$, we can simply take its rational topological type. Indeed, if we have $f\circ \lambda\in K[t]$ for some $\lambda\in\text{Aff}(\overline{K})$, then for any $\sigma\in G_K$ we have that $\lambda^{-1}\circ{^{\sigma}}\lambda$ is an automorphism of the cover $f(t)-y$, and hence trivial. Therefore, $\lambda$ is defined over $K$.
\end{rem}

\section{The case of quartics: modularity}\label{4}

\subsection{The Hurwitz space of quartics as a modular curve}\label{4.1}

From now on, $K$ will denote a number field. We consider the case $d=4$: compactifying our reduced Hurwitz space, we get a rational modular curve\begin{align*}
    \beta_4\colon\H_4\longrightarrow X(1),
\end{align*} to which we refer simply as the Hurwitz space of quartics, since the information about those that are non-Morse is carried by the fiber above the cusp. We adopt the notation $j_{\mathrm{CV}}$ for our coordinate on the level $1$ modular curve, corresponding to the critical $j$-invariant of the (classes of) covers above it.

Let us focus on the geometry of our modular curve $\H_4=:\Gamma\backslash\h$. Recall the formula for the genus of quotients of the upper half plane by modular subgroups \cite[p.66 and 68]{ds}:
\begin{align}
    g(X(\Gamma))=1+\frac{[\PSL_2(\Z):\Gamma]}{12}-\frac{\epsilon_2}{4}-\frac{\epsilon_3}{3}-\frac{\epsilon_{\infty}}{2},
\end{align}
where the $\epsilon_i$'s count points of period $2$ and $3$, and cusps, respectively. Since $[\PSL_2(\Z):\Gamma]=\deg\beta_4=4$, the index term gives a contribution of $\frac{1}{3}$. As the genus is nonnegative, $\epsilon_{\infty}\ge1$, and $3$ is coprime with $4$, surely $\epsilon_3=1$. Then necessarily $g(\H_4)=0$. Notice how \textit{a priori} we have the two possibilities $(2,1)$ and $(0,2)$ for $(\epsilon_2,\epsilon_{\infty})$.

Having degree less than $7$, our $\Gamma$ is a congruence subgroup by a result of Wohlfahrt \cite[Theorem 5]{Wo}, so $\H_4$ is a congruence modular curve. By the list of genus $0$ congruence subgroups in the Cummins-Pauli database \cite{genus}, it is either $X_0(3)$ or $4A^0\backslash\h=X(4A^0)$ as a curve over $\C$ (and hence over $\overline{\Q}$). Observe how these groups realize the two aforementioned possibilities for the number of elliptic points above $j_{\mathrm{CV}}=1728$ and of cusps.

To determine whether $\H_4$ is $X_0(3)$ or $X(4A^0)$ we just need to compute $\epsilon_2$. Recall that quadruples $\widehat y=\{y_1,y_2,y_3,y_4\}\in\U_4$ with elliptic $j$-invariant have stabilizer larger than the usual Klein group (in \cite[\S 4.2.2]{Ca}, Cadoret gives explicit descriptions for the Legendre representatives), in particular $D_8$ for $j_{\mathrm{CV}}=1728$: $\H_4$ has no elliptic point above $j_{\mathrm{CV}}=1728$, i.e. $\epsilon_2=0$, precisely if this stabilizer acts freely on topological types in the fiber. Indeed, the elliptic point above $j_{\mathrm{CV}}=0$ corresponds to (the orbit of) the topological type of $p_0(x)=x^4-x$, which is fixed by multiplication by $\omega$. The following result shows that our reduced Hurwitz space is the modular curve $X_0(3):$

\begin{lem}\label{iell}
    The outer $\PGL_2(\C)$-action on polynomial topological types with critical $j$-invariant $1728$ is free. Therefore, $\H_4\simeq X_0(3)$ over $\overline{\Q}$.
\end{lem}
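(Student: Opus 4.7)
The plan is to establish freeness by showing that if $f\in\C[x]$ is a Morse quartic with critical-value set $\widehat y=C_f\cup\{\infty\}$ of $j$-invariant $1728$, then no nontrivial $\alpha\in\PGL_2(\C)$ satisfies $\alpha\circ f=f\circ\lambda$ for some $\lambda\in\mathrm{Aff}(\C)$. First I would argue, as in the proof of Proposition \ref{pol}, that such an $\alpha$ must preserve $\widehat y$ and must fix the unique branch point of $4$-cycle local monodromy; since this branch point is $\infty$, we get $\alpha\in\mathrm{Aff}(\C)$. The stabilizer of $\widehat y$ in $\PGL_2(\C)$ is isomorphic to $D_8$ and acts transitively on the four points of $\widehat y$ (already the subgroup $V_4$ acts regularly), so the subgroup fixing $\infty$ has order $2$, and $\alpha$ must be the unique nontrivial element of this subgroup, in particular an affine involution. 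The double-transposition case is dispatched exactly as in Proposition \ref{pol}, so only this extra affine involution remains to be ruled out.

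Squaring $\alpha\circ f=f\circ\lambda$ gives $f=f\circ\lambda^2$; as Morse quartics correspond to mere covers with trivial automorphism group (the Galois group $S_4$ has trivial center, as recalled in the proof of Theorem \ref{lifts}), this forces $\lambda^2=\mathrm{id}$. Writing the two involutions as $\alpha(y)=-y+b$ and $\lambda(x)=-x+a$, the defining relation becomes
\[
f(x)+f(-x+a)=b.
\]
The left-hand side is a polynomial in $x$ whose $x^4$-coefficient is $2$ times the leading coefficient of $f$, hence nonzero, contradicting the right-hand side being a scalar. This rules out every nontrivial $\alpha$ and proves freeness.

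Freeness immediately gives $\epsilon_2=0$. Combined with the values $g(\H_4)=0$, $\epsilon_3=1$, $[\PSL_2(\Z):\Gamma]=4$ already recorded, the genus formula forces $\epsilon_\infty=2$; of the two options listed in \cite{genus}, this matches $X_0(3)$ rather than $X(4A^0)$. Thus $\H_4\simeq X_0(3)$ as complex curves, and since both admit models over $\Q$ the isomorphism descends to $\overline{\Q}$. I expect the conceptual step, namely the monodromy-based reduction that pins $\alpha$ into $\mathrm{Aff}(\C)$, to be the main point of the argument; the leading-coefficient contradiction afterwards is a routine algebraic check, and the identification of the modular curve is then an elementary consequence of the genus formula together with the Cummins--Pauli classification.
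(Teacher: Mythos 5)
Your argument is correct, and it follows the paper's reduction step for step up to the decisive point: like the paper, you use the $4$-cycle at $\infty$ to force $\alpha$ into $\text{Aff}(\C)$, identify the affine part of the $D_8$-stabilizer of the branch quadruple as a single involution $\alpha\colon y\mapsto -y+b$, and must then rule out a relation $\alpha\circ f=f\circ\lambda$. Where you diverge is in how that relation is killed. The paper compares leading coefficients to get $\lambda(x)=\zeta_8x+b'$ with $\zeta_8$ a primitive eighth root of unity, and then runs an iterated-substitution argument on the four roots of $f$ to force $b'=0$ and $f=x^4$. You instead square the relation to get $f=f\circ\lambda^2$ and invoke the triviality of the automorphism group of a Morse $S_4$-cover (a fact the paper itself records, independently of this lemma, in the proof of Theorem \ref{lifts}) to conclude $\lambda^2=\mathrm{id}$, i.e. $\lambda(x)=-x+a$; the leading coefficient of $f(x)+f(-x+a)$ is then $2\cdot\mathrm{lc}(f)\neq0$, contradicting $\alpha\circ f=f\circ\lambda$. (Equivalently: $\lambda^2=\mathrm{id}$ forces the linear part of $\lambda$ to be $\pm1$, while matching leading coefficients forces it to be a primitive eighth root of unity.) This is shorter and avoids the root-chasing computation, at the cost of importing the deck-group fact; the paper's version is more self-contained polynomial algebra. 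The only cosmetic omission is the degenerate case $\lambda=\mathrm{id}$, which gives $2f=b$ and dies instantly. Your concluding step---$\epsilon_2=0$ plus the genus formula gives $\epsilon_\infty=2$, singling out $X_0(3)$ in the Cummins--Pauli list---is exactly the paper's.
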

\begin{proof}
    Let $f$ be a polynomial having critical values as in the hypotheses and let $S_f=\text{Stab}_{\PGL_2(\C)}((f))$. Since $j_{\mathrm{CV}}=1728,$ we can suppose without loss of generality that $C_f=\{0,1,-1\}$ by applying an affine transformation to the polynomial: the new stabilizer will simply be the conjugate under this transformation of the old one; moreover, we can set $f(0)=0$ by a change of variable $x\mapsto x+c$, which does not change the topological type. Any element of $S_f$ must fix $\infty$ as the only critical value of $(f)$ with $4$-cycle local monodromy: from the aforementioned description of the stabilizers for Legendre representatives, we get that the only affine nontrivial element in $S_f$ is $z\mapsto-z$. Therefore, if the statement were false, we would have algebraic numbers $a,b$ such that \begin{align}
        -f(x)=f(ax+b),\label{1728}
    \end{align}which immediately gives $a=\zeta_8$ a primitive $8$-th root of unity. As $f$ and $-f$ have the same roots and $0$ is one of them, iteratively substituting in the right-hand side of \eqref{1728} gives: \begin{align}
        0=f(0)=-f(b)=f(b(\zeta_8+1))=-f(b(\zeta_8^2+\zeta_8+1))=f(b(\zeta_8^3+\zeta_8^2+\zeta_8+1)).\label{1728'}
    \end{align} Since all expressions in $\zeta_8$ appearing in \eqref{1728'} are nonzero (notice how this is false if instead of $\zeta_8$ we put $\omega$, which is the corresponding value in the case $j_{\mathrm{CV}}=0$, where we indeed have an elliptic point---see Corollary \ref{fixp}) and pairwise distinct, and $f$ has at most four roots, we get $b=0$, but then $f(x)=x^4$ which does not have distinct critical values. 
\end{proof}
\begin{reml}
    The modular curve $X_0(3)$ can be realized as a reduced Hurwitz space also for a different datum, namely that of the \textit{Lattès maps} of $3$-isogenies of elliptic curves, see \cite[\S 4.2]{Fr1} and \cite{CIR}.
\end{reml}

Notice how Proposition \ref{pol} together with Lemma \ref{iell} imply the following nice result:
\begin{cor}
    The only topological types of quartic Morse polynomials over $\C$ having nontrivial stabilizer for the outer action of $\text{Aff}(\C)$ are those of the form $[ap_0+b]=[ax^4-ax+b], \ a\in\C^*,b\in\C$.\label{fixp}
\end{cor}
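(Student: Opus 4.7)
The plan is to combine the degree count of Proposition \ref{pol} with the modular identification $\H_4\simeq X_0(3)$ established in Lemma \ref{iell}, and then identify the resulting distinguished orbit with that of $p_0$. First I would argue that if $[f]$ has nontrivial outer $\text{Aff}(\C)$-stabilizer then its critical $j$-invariant must lie in $\{0,1728\}$: the proof of Proposition \ref{pol} shows that the generic Klein four-stabilizer $V$ of a quadruple acts freely on the four polynomial topological types in each fiber of $\beta_4$, since every nontrivial element of $V$ is a double transposition moving $\infty$; hence $\beta_4$ can only ramify above the two elliptic $j$-invariants in $X(1)$. Lemma \ref{iell} then rules out $j_{CV}=1728$ by exhibiting a free outer action there, so we are reduced to the fiber over $j_{CV}=0$.

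Next I would invoke $\H_4\simeq X_0(3)$, which has exactly one elliptic point (of order $3$, lying above $j=0$) and no elliptic points of order $2$; as a map of Riemann surfaces, $\beta_4$ thus has two preimages above $j_{CV}=0$, with ramification indices matching the partition $4=1+3$. Translating back to the Hurwitz picture, the quotient $A_4/V\simeq\Z/3\Z$ of the enhanced stabilizer at $j=0$ permutes the four polynomial topological types in the fiber into one size-$1$ orbit and one size-$3$ orbit, so exactly one topological type (up to outer $\text{Aff}(\C)$-action) has nontrivial stabilizer. The text immediately preceding Lemma \ref{iell} already exhibits $[p_0]=[x^4-x]$ as such a fixed type, via the identity $\omega p_0(x)=\omega x^4-\omega x=p_0(\omega x)$, which realizes the outer action by $z\mapsto\omega z$ as the inner change of variable $x\mapsto\omega x$.

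Finally, I would conclude by computing the outer $\text{Aff}(\C)$-orbit of $[p_0]$ as $\{[\lambda\circ p_0]:\lambda\in\text{Aff}(\C)\}=\{[ax^4-ax+b]:a\in\C^*, b\in\C\}$, which is exactly the collection in the statement; nontriviality of the stabilizer propagates along this orbit since stabilizers are conjugate along outer orbits. The only small subtlety, which is not really an obstacle, is verifying that outer $\text{Aff}(\C)$-orbits on polynomial topological types agree with the restriction of the outer $\PGL_2(\C)$-orbits on cover classes to polynomial representatives, which is immediate since any Möbius transformation carrying a polynomial to a polynomial must fix the $4$-cycle critical value $\infty$ and therefore lie in $\text{Aff}(\C)$.
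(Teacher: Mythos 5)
Your argument is correct and follows essentially the same route as the paper, which obtains the corollary directly from Proposition \ref{pol} (the generic Klein four-group stabilizer contains no nontrivial affine element, since double transpositions move $\infty$), Lemma \ref{iell} (freeness over $j_{CV}=1728$), and the identification of the order-$3$ elliptic point of $X_0(3)$ above $j_{CV}=0$ with the orbit of $[p_0]$, which is fixed by multiplication by $\omega$. The only cosmetic caveat is that $V$ (resp.\ $A_4/V$) does not literally act on the set of four polynomial topological types---only the subgroup of the quadruple's stabilizer fixing $\infty$ does---but your parenthetical justifications show this is what you are actually using, so the conclusion stands.
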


At this point, we have a rational curve $\H_4$ that fits into a diagram:

\begin{center}
        \begin{tikzpicture}[>=stealth,->,node distance=2cm]
  \node (X) at (0,2) {$\H_4$};
  \node (Y) at (2,2) {$X_0(3)$};
  \node (P2) at (0,-0.3) {$X(1)$};

  \draw[->] (X) to node[above] {$\psi_4$} (Y);
  \draw[->] (X) to node[left] {$\beta_4$}(P2);
  \draw[->] (Y) to node[right] {$\pi_3$}(P2);
\end{tikzpicture}
    \end{center} where we take $\pi_3$ (not to be confused with one of the maps in the bottom row of the diagram in \cref{3.1}) as a rational map making $X_0(3)$ into the moduli space of isomorphism classes $[(E,C)]$ of elliptic curves $E$ with a cyclic $3$-torsion subgroup $C$ (so that $j(E)=\pi_3([(E,C)])$). \textit{A priori,} the isomorphism $\psi_4$ is only defined over $\overline{\Q}$: one may ask whether it is forced to be defined over $\Q$. The answer is affirmative:

\begin{lem}
    For any curve $C/\Q$ with a rational map $\beta\colon C\longrightarrow X(1)$ and an isomorphism $\psi\colon C\longrightarrow X_0(3)$ sitting in a diagram as above, $\psi$ is defined over $\Q$. In particular, $\H_4\simeq_{\Q}X_0(3)$.\label{qiso}
\end{lem}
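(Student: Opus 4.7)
The plan is to prove, for every $\sigma\in\gq$, that $\psi^\sigma=\psi$, which by Galois descent forces $\psi$ to be defined over $\Q$. Since $C$, $X_0(3)$, $X(1)$, $\pi_3$ and $\beta$ are all defined over $\Q$, applying $\sigma$ to the identity $\pi_3\circ\psi=\beta$ gives $\pi_3\circ\psi^\sigma=\beta=\pi_3\circ\psi$. Composing, the $\overline\Q$-automorphism $\tau_\sigma:=\psi\circ(\psi^\sigma)^{-1}$ of $X_0(3)$ satisfies $\pi_3\circ\tau_\sigma=\pi_3$, i.e.\ it lies in the deck transformation group $\Aut(X_0(3)/X(1))$ of the forgetful cover $\pi_3$.

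The heart of the argument is then the triviality of $\Aut(X_0(3)/X(1))$. I would establish this via modular interpretation: the generic fiber of $\pi_3$ above $j(E)\in X(1)$ is naturally identified with the four cyclic subgroups of order $3$ in $E$, and any deck transformation must act on this fiber through an algebraic family of automorphisms of the pairs $(E,C)$ as $E$ varies. For $E$ without complex multiplication, $\Aut(E)=\{\pm1\}$, and $-1$ fixes every cyclic subgroup of $E$ setwise, so the induced permutation of the generic fiber is the identity. Equivalently, one can argue purely group-theoretically: $\Aut(X_0(3)/X(1))=N_G(H)/H$, where $G=\Gal(X(3)/X(1))\cong\PSL_2(\F_3)\cong A_4$ and $H\subset G$ is the image of the Borel subgroup; this $H$ is a Sylow $3$-subgroup of $A_4$, and the four Sylow $3$-subgroups are permuted transitively under conjugation, so $N_G(H)=H$.

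With this triviality in hand, $\tau_\sigma$ is the identity for every $\sigma\in\gq$, so $\psi^\sigma=\psi$ and $\psi$ descends to $\Q$. The ``in particular'' statement then follows by specializing to $C=\H_4$, $\beta=\beta_4$ and $\psi=\psi_4$, which fit into exactly such a diagram by the construction preceding the lemma. The main—and really only—obstacle is the triviality of $\Aut(X_0(3)/X(1))$: the group-theoretic computation is short, but one must be careful to identify $H$ correctly inside the Galois closure $X(3)\to X(1)$ before invoking self-normalization, and to verify that deck transformations of the cover of curves coincide with the abstract Galois-theoretic object $N_G(H)/H$ (which follows from the standard dictionary between subgroups of the Galois closure's Galois group and intermediate covers).
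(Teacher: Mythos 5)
Your proposal is correct and follows essentially the same route as the paper: twist $\psi$ by $\sigma\in\gq$ to obtain an automorphism of $X_0(3)$ over $X(1)$, then kill it by identifying the deck group of $\pi_3$ with $N_G(H)/H$ for $G=\Gal(X(3)/X(1))\cong A_4$ and $H$ the order-$3$ image of $\Gamma_0(3)$, which is self-normalizing. If anything, your Sylow-counting justification ($4$ conjugate Sylow $3$-subgroups forces $N_G(H)=H$) is a cleaner version of the paper's terse ``the index is prime'' remark, and your alternative moduli-theoretic sketch is dispensable.
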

\begin{proof}
    As all objects and morphisms are defined over $\Q$ except at most for $\psi,$ if we act on the diagram by an element $\sigma$ of $\gq$ we get another diagram with $^{\sigma}\psi=\sigma\circ\psi\circ\sigma^{-1}$ in place of $\psi$, from which we extract an automorphism $^{\sigma}\psi\circ\psi^{-1}$ of $X_0(3)$ over $X(1)$. Therefore, it suffices to show that the only such automorphism is the identity. But these automorphisms are classified by Galois theory: their set bijects with the quotient by $H=\Gamma_0(3)/\Gamma(3)\simeq\Z/3$ of its normalizer $N$ in $\Gamma(1)/\Gamma(3)\simeq A_4$, since $X(3)\longrightarrow X(1)$ is the Galois closure of $X_0(3)\longrightarrow X(1)$. As $A_4$ has four $3$-Sylow subgroups, we must have $|N|=\frac{12}{4}=3$ and hence $N=H$, so we are done.
\end{proof}
\begin{reml}
    As the proof suggests, this is generally false if in the statement we replace $(X_0(3),\pi_3)$ with a pair $(C_0,\pi)$ having nontrivial automorphisms. For instance, let $C_0$ be an elliptic curve over $\Q$ with $\pi$ the usual $2$-covering of $\P^1$, and let $C$ be a nontrivial quadratic twist; the isomorphism $\alpha$ is then only defined over a quadratic extension of $\Q$, the generator of whose Galois group sends $\alpha$ to $[-1]_{C_0}\circ\alpha$, with $[-1]_{C_0}$ the multiplication by $-1$ on $C_0$.
\end{reml}

\begin{cor}
    We have $X_0(3)\simeq_{\Q}X_1(3)$ and hence, for any elliptic curve $E/\Q$ with a cyclic subgroup of order $3$ defined over $\Q$, there exist an elliptic curve $E'/\Q$ with a rational point of (exact) order $3$ and such that $j(E)=j(E')$.\label{x0x1}
\end{cor}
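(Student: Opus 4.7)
The plan is to derive the isomorphism $X_0(3)\simeq_{\Q}X_1(3)$ from Lemma \ref{qiso}, and then obtain the elliptic curve $E'$ via a quadratic twist of $E$.

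First, I would apply Lemma \ref{qiso} with $C=X_1(3)$, $\beta$ its $j$-invariant map, and $\psi\colon X_1(3)\longrightarrow X_0(3)$ the forgetful morphism $[(E,P)]\mapsto[(E,\langle P\rangle)]$. All three spaces are defined over $\Q$ as coarse moduli, and the compositions $\beta$ and $\pi_3\circ\psi$ both give the $j$-invariant; so to invoke Lemma \ref{qiso} it remains to check that $\psi$ is an isomorphism over $\overline{\Q}$. This is a Fuchsian-group calculation: one has $[\Gamma_0(3):\Gamma_1(3)]=|(\Z/3\Z)^*|=2$ and $-I\in\Gamma_0(3)\setminus\Gamma_1(3)$, so $\Gamma_0(3)=\Gamma_1(3)\sqcup(-I)\Gamma_1(3)$, and hence $\overline{\Gamma_0(3)}=\overline{\Gamma_1(3)}$ in $\PSL_2(\Z)$. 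The two modular curves $X_0(3)_{\overline{\Q}}$ and $X_1(3)_{\overline{\Q}}$ are therefore canonically identified over $X(1)$, and Lemma \ref{qiso} upgrades the identification to an isomorphism over $\Q$.

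For the existence of $E'$, let $E/\Q$ admit a $\Q$-rational cyclic subgroup $C\subset E$ of order $3$, and let $P\in C(\overline{\Q})$ be any generator. Since $C$ is Galois-stable, the $\gq$-orbit of $P$ is contained in $\{P,-P\}$, so the Galois action on $P$ factors through a quadratic character $\chi\colon\gq\longrightarrow\{\pm1\}\subseteq\Aut(E)$. I would then take $E'$ to be the quadratic twist $E^{\chi}$, which satisfies $j(E')=j(E)$; with the twisting isomorphism $\phi\colon E_{\overline{\Q}}\xrightarrow{\sim}E'_{\overline{\Q}}$, a direct computation yields
\[
\sigma(\phi(P))=\phi([\chi(\sigma)]\sigma(P))=\phi([\chi(\sigma)]^2 P)=\phi(P)
\]
for every $\sigma\in\gq$, so $\phi(P)\in E'(\Q)$ is a rational point of exact order $3$.

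I do not anticipate any serious obstacle. The only subtle step is that Lemma \ref{qiso} takes an $\overline{\Q}$-isomorphism as input, and the Fuchsian-group identity above is what supplies it; beyond that, both parts reduce to standard bookkeeping.
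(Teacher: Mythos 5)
Your proposal is correct. For the isomorphism $X_0(3)\simeq_{\Q}X_1(3)$ you follow the same route as the paper: exhibit the $\overline{\Q}$-isomorphism over $X(1)$ and feed it into Lemma \ref{qiso}. The paper compresses the first step into the assertion ``$\Gamma_0(3)=\Gamma_1(3)$'', which is really the statement that their images in $\PSL_2(\Z)$ agree; your computation $[\Gamma_0(3):\Gamma_1(3)]=2$ together with $-I\in\Gamma_0(3)\setminus\Gamma_1(3)$ is exactly the justification being elided, so this part matches. Where you genuinely diverge is the second half: the paper deduces the existence of $E'$ directly from the $\Q$-isomorphism of modular curves, implicitly invoking the moduli interpretation of $\Q$-points on $Y_0(3)$ and $Y_1(3)$, whereas you bypass the modular curves entirely and construct $E'$ as the quadratic twist of $E$ by the character $\chi$ through which $\gq$ acts on a generator $P$ of $C$. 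Your cocycle computation $\sigma(\phi(P))=\phi([\chi(\sigma)]^2P)=\phi(P)$ is correct, and since $\{\pm1\}\subseteq\Aut(E_{\overline{\Q}})$ for every $j$-invariant, the twist exists unconditionally. This buys you something the paper's one-line deduction does not make explicit: it avoids any field-of-moduli versus field-of-definition issues that arise when reading off objects over $\Q$ from $\Q$-points of a coarse moduli space, and it works uniformly at $j=0$ and $j=1728$. The trade-off is that your argument for the second claim no longer uses, and hence does not illustrate, the isomorphism $X_0(3)\simeq_{\Q}X_1(3)$ that the corollary is really recording for later use in \S5.1.
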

\begin{proof}
    As $\Gamma_0(3)=\Gamma_1(3)$, we have an isomorphism $\psi:X_1(3)\longrightarrow X_0(3)$ over $\C$, and hence over $\overline{\Q}$, that commutes with the projections to $X(1)$. The claim now follows from Lemma \ref{qiso}.
\end{proof}

\subsection{A direct method}\label{4.2}

Our Hurwitz space $\H_4$ has genus $0$ and a certain algebraic structure over $\Q$ coming from Hurwitz theory. Since a point on $\H_4$ represents a $\PGL_2(\C)$-orbit of topological types, it is natural to ask if we can describe its coordinate explicitly as a function of the orbit. Recall that the morphism $\theta_d$ of Section 2 maps critical points to critical values: one may then guess that $\beta_4$ maps the $j$-invariant of the critical points of the orbit to that of the critical values. Indeed, this map is well-defined: the critical points of a topological type form a $\PGL_2(\C)$-orbit, while $Z_{\alpha(f)}=Z_f$ for $\alpha\in\PGL_2(\C)$ (basically, their behavior under the two $\PGL_2(\C)$-actions we introduced is the mirror image of that of critical values). Moreover, to any $j\in\C$ we can associate the $\PGL_2(\C)$-orbit of the (unique, by the above discussion) topological type of $$\int_0^x(u-u_1)(u-u_2)(u-u_3) du,$$ with $\{u_1,u_2,u_3,\infty\}$ any quadruple with $j$-invariant $j$. In summary, by sending an orbit to the $j$-invariant of its critical points, we get a bijection:
\begin{align}
\left\lbrace
\begin{aligned}
\displaystyle
\text{$\PGL_2(\C)$-orbits of topological types of complex Morse quartics} \
\end{aligned}
\right\rbrace
\overset{1:1}{\longleftrightarrow} \ 
\A^1(\C).   \label{cp}
\end{align}

Identify $H_4^{\text{red}}(S_4,C_{\text{pol}})$ with $\A^1$ via the $j$-invariant $j$ of the critical points: by compactifying, the above correspondence yields a map $\widetilde{\beta_4}\colon\H_4\longrightarrow X(1)$, \ $j\mapsto j_{\mathrm{CV}}$. Note that even if we know that such a map exists, it need not be $\beta_4$ \textit{a priori:} nonetheless, we will now prove that $\widetilde{\beta_4}$ is indeed an algebraic morphism defined over $\Q$ by computing it explicitly, so necessarily $\widetilde{\beta_4}$ and $\beta_4$ are the same up to a change of variable in $\PGL_2(\Q)$ by the uniqueness property for algebraic models of Hurwitz spaces (see the paragraph before \eqref{galc}). In the following, the variable $j$ will represent a point in $\H_4$ with the above identification.

Before working out an explicit model for $\beta_4$, we remark that the above discussion shows how \textit{any} point $j\in H_4^{\text{red}}(S_4,C_{\text{pol}})(K)$ lifts to a quartic defined over $K$: we can just integrate the Weierstrass polynomial of any $K$-rational elliptic curve with $j$-invariant $j$ (the existence of such a curve is well known, see \cite[Proposition 1.4]{silv}). This completes the proof of Theorem \ref{lifts}.

Fix an isomorphism $u\colon\P^1\longrightarrow X_0(3)$ and let us take for $\pi_3$ the model given by $u\mapsto \frac{(u+3)^3(u+27)}{u}$ (see \cite[p.23]{su}). For a complex cubic curve given by a Weierstrass equation $E: y^2=x^3+Ax+B$, let\footnote{our choice of linear normalization for the integral turns out to be convenient in computing the transformation in \eqref{2map}, but any other choice would work.}: \begin{align}
    f_{E}(x)=12\int_0^x(s^3+As+B) \ ds - A^2.\label{normp}
\end{align} Moreover, let $\Delta_E=-16(4A^3+27B^2)$ be the discriminant of $E$ (note that we are allowing $\Delta_E$ to vanish), let $j=-1728\frac{(4A)^3}{\Delta_E}\in\P^1(\C)$ be its $j$-invariant and $j_{\mathrm{CV}}\in\P^1(\C)$ be that of the cubic $E^{\mathrm{CV}}:z^2=p_E(x)$, where $p_E(x)$ is the monic polynomial vanishing at the critical values of $f_{E}$. Since $j$ and $j_{\mathrm{CV}}$ represent respectively the $j$-invariant of the critical points and of the critical values of a variable quartic, there is no ambiguity with our coordinates for $\H_4$ and $X(1)$, as the latter stand for the same things in our moduli problem. 
\begin{lem}\label{a4}
    We have \begin{align*}
    j_{\mathrm{CV}}=\pi_3\left(\frac{j}{64}-27\right).
\end{align*}
\end{lem}
\begin{proof}
The roots of $p_E$ are $f_E(x_i)$, for $x_i$ the three roots of the Weierstrass polynomial of $E$. Looking at $f_E$ modulo the latter, we find that the roots of $p_E$ are $r_i=3Ax_i^2+9Bx_i-A^2$. 
%Therefore, $E^{\text{CV}}$ has the same $j$-invariant of its twist
   % \begin{align*}
   % E_2: y^2=\prod_i (x-r_i),
   % \quad
%r_i:=x_i\Bigl(x_i+\frac{3b}{a}\Bigr).
%\end{align*}
Since
\begin{align*}
x_1+x_2+x_3=0,\\
x_1x_2+x_1x_3+x_2x_3=A,\\
x_1x_2x_3=-B,
\end{align*}
we get
\begin{align*}
\sum r_i=-9A^2,\quad
\sum_{i<j} r_ir_j=-\frac{3}{8}A\Delta_E,\quad
\prod r_i=\frac{\Delta_E^2}{256};
\end{align*}
therefore,
\begin{align}
E^{\mathrm{CV}}:y^2=x^3+\left(3Ax-\frac{\Delta_E}{16}\right)^2.\label{eq: ecv}
\end{align}
\noindent
Applying the $j$-invariant formula from \cite[III.1]{silv} gives:
\begin{align*}j_{\mathrm{CV}}=-\frac{256A^3(A^3-54B^2)^3}{B^2(4A^3+27B^2)^3}.
\end{align*}
Letting
\begin{align*}
\mu:=\frac{27B^2}{4A^3}
=\frac{1728-j}{j},
\end{align*}
we get:
\begin{align}
j_{\mathrm{CV}}=-\frac{27(1-8\mu)^3}{\mu(1+\mu)^3}=-\frac{27\left(9\frac{j-1536}{j}\right)^3}
{\left(\frac{1728-j}{j}\right)\left(\frac{1728}{j}\right)^3}=\frac{j(j-1536)^3}{2^{18}(j-1728)}=\pi_3\left(\frac{j}{64}-27\right).\label{b4model}
\end{align}
\end{proof}

\noindent
Observe how the second-to-last expression in \eqref{b4model} is our explicit model for $\beta_4$, so we find $\psi_4(j)=\frac{j}{64}-27$ in the diagram in \cref{4.1}. Moreover, Lemma \ref{a4} implies that the integrals of the Weierstrass polynomials of $\C-$isomorphic elliptic curves have the same critical $j$-invariant, which is not a surprise since it is equivalent to the right direction of \eqref{cp}. Finally, the lemma allows us to characterize the critical $j$-invariants of quartics over $K$: let \begin{align*}
        CJ(K)=\{v\in K\cup\{\infty\}:\exists f\in K[x]\text{ s.t. $\deg f=4$ and }j(C_f)=v\}.
    \end{align*} 
\begin{prop}
    We have:
    \begin{align*}
       CJ(K)=\biggl\{\frac{(u+3)^3(u+27)}{u}, \ u\in K\cup\{\infty\}\biggl\}.
    \end{align*} 
  \label{eqj}
\end{prop}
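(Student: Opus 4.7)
The plan is to interpret $CJ(K)$ as the image $\beta_4\bigl(\H_4(K)\bigr)$ on the $K$-rational points of the Hurwitz space, and then apply the factorization of $\beta_4$ through $X_0(3)$ established in \cref{4.1}. First I would establish the identity
\[
CJ(K) = \beta_4\bigl(\H_4(K)\bigr).
\]
For the inclusion $\subseteq$, one observes that if $f \in K[x]$ has degree $4$, then the topological type $(f)$ is $\gk$-stable (since $f$ is), so its $\PGL_2(\C)$-orbit corresponds to a $K$-rational point $x_f \in \H_4(K)$ whose image under $\beta_4$ is by construction $j(C_f)$. For the reverse inclusion, when $\beta_4(x) \neq \infty$ one invokes Theorem \ref{lifts}(i) to lift $x$ to a quartic defined over $K$; the cuspidal case $\beta_4(x) = \infty$ is handled separately, simply by noting that $\infty \in CJ(K)$ holds trivially (e.g.\ via $x^4 \in K[x]$, whose critical values collapse to a single finite point).

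Next I would invoke Lemma \ref{qiso} together with the commutative diagram of \cref{4.1}: these provide a $\Q$-isomorphism $\psi_4 \colon \H_4 \to X_0(3)$ such that $\beta_4 = \pi_3 \circ \psi_4$. Taking $K$-rational points yields
\[
\beta_4\bigl(\H_4(K)\bigr) = \pi_3\bigl(X_0(3)(K)\bigr).
\]
Since the choice of model makes $X_0(3) \simeq_{\Q} \P^1$ in the coordinate $u$, one has $X_0(3)(K) = K \cup \{\infty\}$, and substituting the explicit formula $\pi_3(u) = (u+3)^3(u+27)/u$ recorded above \eqref{a4} gives the desired description of $CJ(K)$.

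The argument is essentially formal once Theorem \ref{lifts}(i), Lemma \ref{qiso} and the explicit computation \eqref{a4} are in hand, so there is no serious technical obstacle. The only point deserving a little care is the treatment of the cusps, where both $u = 0$ and $u = \infty$ on $X_0(3)$ map to $j_{CV} = \infty$: these correspond on the $\H_4$-side to non-Morse quartics, which lie in $CJ(K)$ over any $K$, so the boundary cases are accounted for consistently with the parametrization.
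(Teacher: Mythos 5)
Your proposal is correct and follows essentially the same route as the paper: both identify $CJ(K)$ with $\beta_4(\H_4(K))$ via Theorem \ref{lifts}(i), transport this through the $\Q$-isomorphism $\psi_4$ of Lemma \ref{qiso} to get $\pi_3(X_0(3)(K))$, and check separately that the cusp value $\infty$ belongs to both sides. The only cosmetic difference is that you exhibit $\infty\in CJ(K)$ directly via a non-Morse quartic, whereas the paper notes $\beta_4(1728)=\infty$ using \eqref{a4}.
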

\vspace{-6mm}
\begin{proof}
    Theorem \ref{lifts}(i) and Lemma \ref{qiso} imply that \begin{align}
        \text{CJ}(K)\cup\{\beta_4(\infty)\}=\{\infty\}\cup\biggl\{\frac{(u+3)^3(u+27)}{u}, \ u\in K\cup\{\infty\}\biggl\}=\biggl\{\frac{(u+3)^3(u+27)}{u}, \ u\in K\cup\{\infty\}\biggl\}
    \end{align}(the singleton in the intermediate expression is added \textit{a priori} to make sure we are accounting for non-Morse quartics), while Lemma \ref{a4} tells us that $\beta_4(\infty)=\infty=\beta_4(1728)\in\text{CJ}(K)$, so we are done.\end{proof}

Let us examine the fiber of $\beta_4$ above $j_{\mathrm{CV}}=0$: since $\pi_3$ vanishes only for $u=-3$ and $u=-27$ (with order respectively $3$ and $1$, in accordance with the ramification structure we previously found), Lemma \ref{a4} tells us that the values of $j$ corresponding to these two preimages are $0$ and $1536$: in other words, the elliptic curves with $j$-invariant $0$ and $1536$ are precisely those with Weierstrass integrals having critical $j$-invariant $0$. What about elliptic curves with $j=1728$? Since $\beta_4(1728)=\infty$, they have non-Morse Weierstrass integrals: this is indeed immediate to check with $E:y^2=x^3-x$. 

We are now almost ready to prove Theorem \ref{th2}: we only need the following strengthening of Theorem \ref{lifts}\\r(i):
\begin{lem}\label{polrep}
    Let $j_{\mathrm{CV}}\in K\setminus\{0,1728\}$. For any $j\in\H_4(K)$ such that $\beta_4(j)=j_{\mathrm{CV}}$ and any quadruple $\widehat y=\{y_1,y_2,y_3,\infty\}\in\U_4(K)$ with $j(\widehat y)=j_{\mathrm{CV}}$, $j$ lifts to a quartic $f\in K[x]$ with $C_f\cup\{\infty\}=\widehat y$.\end{lem}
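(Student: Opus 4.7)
The plan is to start from any quartic lift $f_0 \in K[x]$ of $j$ (provided by Theorem \ref{lifts}(i), whose proof does cover the case $\beta_4(j)=j_{CV}\notin\{0,1728\}$) and then modify it by postcomposition with a $K$-affine transformation that sends the critical quadruple of $f_0$ onto $\widehat{y}$. Concretely, set $\widehat{y}_0 := C_{f_0} \cup \{\infty\} \in \U_4(K)$; by construction, $\widehat{y}_0$ has the same $j$-invariant $j_{CV}$ as $\widehat{y}$.

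The heart of the proof is to produce a $\lambda \in \text{Aff}(K)$ with $\lambda(\widehat{y}_0) = \widehat{y}$. Over $\overline{K}$, since the two quadruples share a $j$-invariant, there exists some $\alpha \in \PGL_2(\overline{K})$ with $\alpha(\widehat{y}_0) = \widehat{y}$. The hypothesis $j_{CV} \notin \{0,1728\}$ ensures that $\mathrm{Stab}_{\PGL_2(\C)}(\widehat{y})$ is a Klein four-group, embedded in $S_4$ as the normal subgroup of double transpositions and hence acting regularly on the four points of $\widehat{y}$. Consequently, up to composing $\alpha$ on the left with a suitable element of this stabilizer, we may assume $\alpha(\infty)=\infty$; such an $\alpha$ is then automatically affine, yielding a $\lambda \in \text{Aff}(\overline{K})$ as desired. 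Moreover $\lambda$ is unique: any two candidates would differ by an affine element of the Klein stabilizer, and no nontrivial double transposition fixes $\infty$.

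Uniqueness then forces $\lambda$ to descend: for every $\sigma \in G_K$, the conjugate ${}^{\sigma}\lambda$ is another affine map sending the $K$-rational tuple $\widehat{y}_0$ to $\widehat{y}$, hence equals $\lambda$, so $\lambda \in \text{Aff}(K)$. Setting $f := \lambda \circ f_0 \in K[x]$ then gives a quartic with $C_f \cup \{\infty\} = \widehat{y}$, and since $f = \lambda(f_0)$ lies in the same outer $\PGL_2(\C)$-orbit of topological types as $f_0$, it still represents the point $j \in \H_4(K)$. The only real subtlety is that the regular action of the Klein stabilizer breaks precisely at $j_{CV} \in \{0, 1728\}$, where the larger stabilizers recalled before Lemma \ref{iell} contain nontrivial affine symmetries and $\lambda$ stops being unique; this is why the hypothesis on $j_{CV}$ is essential to the argument.
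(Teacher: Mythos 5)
Your proof is correct, and it diverges from the paper's at the final descent step in a way worth noting. Both arguments begin identically: take a $K$-rational lift $f_0$ from Theorem \ref{lifts}(i) (legitimately, since only the nonelliptic case is needed here), use the fact that $\widehat y_0$ and $\widehat y$ share a $j$-invariant to get some $\alpha\in\PGL_2(\overline{K})$ carrying one quadruple to the other, and correct $\alpha$ by an element of the Klein four-group stabilizer of $\widehat y$ so that it fixes $\infty$ and is therefore affine. Where the paper then concedes that $\lambda(f_0)$ "may not be defined over $K$" and falls back on the field-of-moduli descent argument following \eqref{fmod} (concluding only that the topological type of $\lambda(f_0)$ contains a rational polynomial), you instead observe that $\lambda$ is the \emph{unique} affine map sending $\widehat y_0$ to $\widehat y$ --- because any two such differ by an affine element of the Klein stabilizer, whose nontrivial elements act as double transpositions and so fix no point of $\widehat y$ --- and that uniqueness plus the $K$-rationality of both quadruples as sets forces ${}^{\sigma}\lambda=\lambda$ for all $\sigma\in\gk$, i.e.\ $\lambda\in\text{Aff}(K)$. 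This is more elementary (no appeal to fields of moduli being fields of definition for covers without automorphisms) and yields the marginally stronger conclusion that the specific polynomial $\lambda(f_0)$ is already $K$-rational; it is in fact the same style of rigidity argument the paper itself deploys later in \cref{51} when justifying that the transformations $\lambda_t$ relating the $2$-torsions of isomorphic curves with nonelliptic $j$-invariant are defined over $K$. Your closing remark correctly identifies why the hypothesis $j_{CV}\neq0,1728$ is essential: for elliptic $j_{CV}$ the stabilizer is larger and contains nontrivial affine elements, so both existence-after-correction and uniqueness break down.
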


\begin{proof}
Let $f_0$ be a lift of $j$ as in the statement of Theorem \ref{lifts}, and set $\widehat w=C_{f_0}\cup\{\infty\}$. As $j(\widehat w)=j_{\mathrm{CV}}$ by construction, there is $\lambda\in\PGL_2(\overline{\Q})$ such that $\lambda(\widehat w)=\widehat y$ (as sets); up to composing with the double transposition in $\text{Stab}_{\PGL_2(\overline{\Q})}(\widehat y)$ that swaps $\infty$ and $\lambda(\infty)$, we can assume $\lambda$ to be affine. Therefore, $\lambda(f_0)$ is a quartic with critical values $\widehat y$, but it may not be defined over $K$. Still, it has nonelliptic critical $j$-invariant and it satisfies equation \eqref{fmod} for any $\sigma\in\gk$, so by the argument after that equation its topological type contains a rational polynomial.
\end{proof}

\begin{proof}[Proof of Theorem \ref{th2}]
    Let us adopt the notation from the statement of the theorem. For $j(E)\notin\{0,1728\}$ the claim follows immediately from Proposition \ref{eqj} and Lemma \ref{polrep}. For $E/K$ with $j(E)=0,$ we show that all such triples $\{y_1,y_2,y_3\}$ can be realized as the finite critical values of a quartic $f\in K[x]$ with $j(Z_f)=0$; given such $f$, from \eqref{eq: ecv} and $A=0$ one gets the model $y^2=x^3+729B^4$ for $E^{\mathrm{CV}}$. Multiplying $f$ by $d\in K^*$ simply twists $E^{\mathrm{CV}}$ by $d$, so we can realize all curves \begin{align}y^2=x^3+729B^4d^3\label{eq:j=0}\end{align} as $E^{\mathrm{CV}}$, for $B,d\in K^*$. Recall that $K$-isomorphism classes in the family of $E/K$ with $j(E)=0$ are classified by $K^*/(K^*)^6$, as the class of the constant term of a Weierstrass polynomial. Since $729=3^6$ and the map $\Z/6\times\Z/6\to\Z/6, \ (m,n)\mapsto4m+3n$ is surjective, we are done in virtue of \eqref{eq:j=0}.

    For $j_{\mathrm{CV}}=1728,$ one finds from \eqref{b4model} that the two preimages in $\H_4$ are $j=1152\pm384\sqrt3$ (see Remark \ref{nongen} for more detail). Take $f$ with $j(Z_f)=j$: this is equivalent to $27B^2=(5\mp3\sqrt3)A^3$; substituting this in \eqref{eq: ecv} after shifting $x=X-3A^2$ gives the model \begin{align}
        y^2=x^3+9(1\pm2\sqrt3)A^4x
    \end{align} for $E^{\mathrm{CV}}$. Since $K$-isomorphism classes of the family of $E/K$ with $j(E)=1728$ are classified by $K^*/(K^*)^4$, as the class of the coefficient of $x$ in a Weierstrass polynomial, multiplying $f$ by a constant we obtain exactly the two quadratic twist subfamilies of the statement (note that shifting $f$ by an additive constant does not change the $K$-isomorphism class of $E^{\mathrm{CV}}$).
\end{proof}

\section{Equicritical quartics and elliptic curves}\label{5}

We have come to the question of inequivalent equicritical quartics: the goal of this section is to prove Theorem \ref{mainthm}. Let $\mathcal{C}_4$ be the non-diagonal irreducible component of $\H_4\underset{X(1)}{\times}\H_4$ (there may be \textit{a priori} up to three such components, but the results of \cref{51} will show that there is only one) and let $\nu:C_4\longrightarrow\mathcal{C}_4$ be its normalization; we refer to a point of $P\in C_4(\overline{\Q})$ such that $\nu(P)$ is singular (or to a pair of quartics to which $P$ lifts) as ``exceptional". A pair of inequivalent equicritical quartics $(f,g)$ over a number field $K$ clearly defines a $K$-point on $C_4$. Moreover, Lemma \ref{polrep} implies that, generically, such a point lifts to a pair of $K$-rational equicritical quartics. Therefore, our task is reduced, at least generically, to that of finding rational points on $C_4$.

\subsection{Elliptic curves with constant mod $3$ representation}\label{51}

As $H_4^{\text{red}}(S_4,C_{\text{pol}})\simeq_{\Q}Y_0(3)$ as covers of $Y(1)$ by Lemma \ref{qiso}, $C_4$ turns out to be $\Q$-isomorphic to the curve $X_3$ studied by Rubin and Silverberg \cite{RS} in the context of parametrizing elliptic families with given mod $p$ Galois representation. Indeed, in virtue of Corollary \ref{x0x1} we have \begin{align*}
    H_4^{\text{red}}(S_4,C_{\text{pol}})\underset{Y(1)}{\times}H_4^{\text{red}}(S_4,C_{\text{pol}})\simeq Y_0(3)\underset{Y(1)}{\times}Y_1(3),
\end{align*}so removing the diagonal we precisely obtain the second equivalent definition for $Y_3$ given in \cite[\S 1]{RS}. This is enough for us to give a proof of a nonconstructive version of Theorem \ref{mainthm}:
\begin{lem}
    There exists a quasi-basis for $\text{EC}_4(K)$ generically parametrized by $\P^1(K)$.\label{noncp}
\end{lem}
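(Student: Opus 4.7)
The plan is to combine the identification $C_4 \simeq_{\Q} X_3$ with the rationality theorem of Rubin and Silverberg to get a rational parametrization of $C_4$, and then to use Lemma \ref{polrep} to lift $K$-rational points back to pairs of inequivalent equicritical quartics over $K$.

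First, by the identification noted in the paragraph preceding the lemma and the main result of \cite{RS}, there is a $\Q$-rational isomorphism $\theta\colon \P^1 \xrightarrow{\sim} C_4$. This parametrizes $C_4(K)$ by $\P^1(K)$ and, as a byproduct, verifies the irreducibility of $\mathcal{C}_4$ asserted in the paragraph introducing it.

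Next, I would define a map $\Phi$ from a cofinite subset of $C_4(K)$ to $\text{EC}_4(K)$. Given $P = (j_1, j_2) \in C_4(K)$ with $\nu(P)$ smooth in $\mathcal{C}_4$ and $\beta_4(j_1) = \beta_4(j_2) =: j_{CV} \notin \{0,1728,\infty\}$ (two conditions excluding only finitely many points), Theorem \ref{lifts}(i) yields a lift $f_1 \in K[x]$ of $j_1$, so that $\widehat y := C_{f_1} \cup \{\infty\}$ is a $K$-rational quadruple with $j(\widehat y) = j_{CV}$. Applying Lemma \ref{polrep} to $j_2$ and $\widehat y$ produces a lift $f_2 \in K[x]$ of $j_2$ with $C_{f_2} \cup \{\infty\} = \widehat y$; in particular $C_{f_1} = C_{f_2}$. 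The topological types $[f_1]$ and $[f_2]$ are distinct since $j_1 \neq j_2$ represent distinct $\PGL_2(\C)$-orbits, and different choices of lifts of $j_1$, $j_2$ (or of $\widehat y$) are related by elements of $\text{Aff}(K)$ by Remark \ref{rtt}, so $\Phi(P) := ([f_1], [f_2])$ is well-defined in $\text{EC}_4(K)$.

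Finally, I would check that $\Phi$ is generically bijective: surjectivity, because any $([f], [g]) \in \text{EC}_4(K)$ determines a $K$-point of $\H_4 \times_{X(1)} \H_4$ off the diagonal, which lies on $\mathcal{C}_4$ by irreducibility and lifts uniquely along $\nu$ outside the singular locus; injectivity, by the same $\text{Aff}(K)$-equivalence argument used for well-definedness. Composing $\Phi$ with $\theta$ then yields a quasi-basis for $\text{EC}_4(K)$ parametrized by $\P^1(K)$. The main obstacle is structural rather than technical: one invokes the nontrivial rationality of $X_3$ as a black box, while the explicit form of the parametrization asserted in Theorem \ref{mainthm} requires a direct analysis of $\mathcal{C}_4$ exploiting its connection to elliptic curves, which occupies the remainder of \cref{5}.
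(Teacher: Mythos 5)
Your argument is correct and follows essentially the same route as the paper: identify $C_4$ with the Rubin--Silverberg curve $X_3$, invoke its rationality (genus $0$ with a rational point) to get the $\P^1(K)$-parametrization, and use Theorem \ref{lifts}(i) together with Lemma \ref{polrep} to lift generic $K$-points of $C_4$ to pairs of inequivalent equicritical quartics over $K$. The paper's proof is terser (the lifting step is delegated to the paragraph preceding the lemma) and also records two alternative ways to see the rationality, but the substance is the same.
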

\begin{proof}
    Observe that any collection of pairs of $K$-rational equicritical quartics $\{(f_P,g_P), \ P\in C_4(K)\}$ parametrized by $C_4(K)$ in the above sense (where we take such a pair for points above $j_{\mathrm{CV}}\in\{0,1728,\infty\}$ only if it exists) makes up a quasi-basis for $EC_4(K)$. Since the genus of $X_3$ is $0$ and it has rational points (see \cite[\S 1.1]{RS}), the claim follows. Alternatively, if one did not know about the modular characterization of the fiber product, the genus can be computed via Riemann-Hurwitz for the desingularization, and the rational point $(0,1536)$ can be exhibited. Even without knowing the explicit form for $\beta_4$ we worked out in \cref{4}, but just the \textit{geometry} of the rational map $\beta_4\colon\H_4\longrightarrow X(1)$, one can exhibit the same rational point by observing that, since the fiber $\beta_4^{-1}(0)$ has two elements and one of them is rational (it corresponds to the orbit of $(p_0)$), the other one must be too.
\end{proof}

The curve $X_3$ is itself a moduli space of elliptic curves, with the additional structure induced by the self-fiber product of $X_0(3)$. In the aforementioned work of Rubin and Silverberg, the authors give a certain rational model $t\colon\P^1\longrightarrow X_3$ of $X_3$, and write down the respective affine equation for the universal elliptic curve over $X_3$ as 
\begin{align*}
    E_t\colon Y^2=X^3-27t(t^3+8)X+54(t^6-20t^3-8),
\end{align*}having $j$-invariant
\begin{align}
    j_t=j(E_t)=27\left(\frac{t(t^3+8)}{t^3-1}\right)^3.\label{juni}
\end{align}

We take as our model for $X_3$ the one obtained by compactifying the self-fiber product of $(X_0(3),\pi_3)$. Thanks to Lemma \ref{a4} and \eqref{juni}, we can explicitly parametrize $C_4(K)$: this amounts to finding two rational functions $x_1(t), x_2(t)$ such that the pairs $(x_1(t),x_2(t))$ as $t$ ranges in $K\cup\{\infty\}$ parametrize $X_3(K)$, and then pulling them back via $\psi_4$.
Searching for rational functions such that \begin{align}
    \pi_3(x(t))=j_t\label{sect}
\end{align}by comparing the right-hand side of \eqref{juni} with $\pi_3$, it is not particularly hard to figure out that \begin{align}
    x_1(t)=\frac{27}{(t^3-1)}\text{ and }x_2(t)=\frac{3(t-1)^3}{t^2+t+1}=x_1\left(\frac{t+2}{t-1}\right)\label{xis}
\end{align}
work. We then have:
\begin{prop}
     The multiset $\{(\psi_4^{-1}(x_1(t)),\psi_4^{-1}(x_2(t))), \ t\in K\cup\{\infty\}\}$ parametrizes $C_4(K)$.\label{magic}
\end{prop}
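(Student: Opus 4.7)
The plan is to show that the assignment
$\phi\colon t\mapsto\bigl(\psi_4^{-1}(x_1(t)),\psi_4^{-1}(x_2(t))\bigr)$
extends to a $\Q$-isomorphism $\P^1\xrightarrow{\sim}C_4$; the desired parametrization is then simply the induced bijection on $K$-rational points.

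First I would verify that $\phi$ maps into the non-diagonal component $\mathcal{C}_4\subset\H_4\times_{X(1)}\H_4$ away from a finite set. Since $\beta_4=\pi_3\circ\psi_4$ by \eqref{a4}, it suffices to check that $\pi_3(x_1(t))=\pi_3(x_2(t))=j_t$. The identity $\pi_3(x_1(t))=j_t$ is a direct computation from the definitions of $\pi_3$ and $x_1$; the identity for $x_2$ then follows either by an analogous direct computation or from the relation $x_2(t)=x_1((t+2)/(t-1))$ noted in \eqref{xis}, together with the invariance of $j_t$ under $t\mapsto(t+2)/(t-1)$, a symbolic check on \eqref{juni}. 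Solving $x_1(t)=x_2(t)$ reduces to $(t-1)^4=9$, so only finitely many values of $t$ give diagonal image; elsewhere $\phi(t)\in\mathcal{C}_4$, and by smoothness of $\P^1$ together with the universal property of normalization, $\phi$ extends uniquely to a morphism $\P^1\to C_4$ defined over $\Q$.

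Next, I would carry out a degree count. The composition $\P^1\xrightarrow{\phi}C_4\to X(1)$ is $t\mapsto j_t$, a rational function of degree $12$ by inspection of \eqref{juni}. On the other hand $C_4\to X(1)$ also has degree $12$: the fiber product $\H_4\times_{X(1)}\H_4$ has degree $\deg(\beta_4)^2=16$ over $X(1)$, of which $4$ is contributed by the diagonal, and the residual degree $12$ is carried by a single irreducible component, since the monodromy of $\beta_4$ (equivalently of $X_0(3)\to X(1)$) acts $2$-transitively on its generic $4$-element fiber of Borel cosets, hence transitively on the ordered pairs of distinct cosets. It follows that $\phi\colon\P^1\to C_4$ has degree one; being a birational morphism between smooth projective $\Q$-curves, it is a $\Q$-isomorphism, and the induced bijection $\P^1(K)\to C_4(K)$ is precisely the claimed parametrization.

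The main obstacle is establishing the degree and irreducibility of $\mathcal{C}_4\to X(1)$; once these are in place, the rest reduces to the elementary calculation $\pi_3\circ x_i=j_t$ and the standard principle that a birational morphism of smooth projective curves is an isomorphism.
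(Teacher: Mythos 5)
Your proposal is correct, and the key step is handled by a genuinely different mechanism than in the paper. Both arguments begin identically, by checking $\pi_3(x_i(t))=j_t$; the divergence is in how one sees that $t\mapsto(x_1(t),x_2(t))$ has degree $1$ onto the non-diagonal component. The paper argues locally on fibers: $x_1$ has degree $3$ with fibers $\{t,\omega t,\omega^2 t\}$, so it suffices to check that $x_2(\omega^k t)=x_2(t)$ is not an identity for $k=1,2$, which is an elementary polynomial verification. You instead argue globally: the composite $\P^1\to C_4\to X(1)$ is $t\mapsto j_t$ of degree $12$, while $\mathcal{C}_4\to X(1)$ has degree $16-4=12$ provided the complement of the diagonal is irreducible, which you deduce from the $2$-transitivity of the $A_4\simeq\PSL_2(\F_3)$ monodromy of $X_0(3)\to X(1)$ on $\P^1(\F_3)$; hence $\deg\phi=12/12=1$. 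Your route has the merit of independently proving the irreducibility of the non-diagonal locus, which the paper only asserts parenthetically when defining $\mathcal{C}_4$ (deferring it to the identification with Rubin--Silverberg's $X_3$), whereas the paper's fiber-by-fiber check is more elementary and self-contained, requiring no monodromy input. Two cosmetic points: solving $x_1(t)=x_2(t)$ gives $(t-1)^4=9$ only away from the common poles $t=\omega,\omega^2$ (where both functions equal $\infty$, cf.\ Remark \ref{nongen}), though this does not affect your finiteness claim; and it is worth noting explicitly that the bijection $\P^1(K)\to C_4(K)$ you obtain accounts for the \emph{multiset} phrasing of the statement, since distinct points of $C_4(K)$ may share the same coordinate pair.
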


\begin{proof}    
    It suffices to show that $X_3(K)$ is parametrized by $\{(x_1(t),x_2(t)), \ t\in K\cup\{\infty\}\}$. One easily verifies that \eqref{sect} holds for $x(t)=x_i(t), \ i=1,2$. Now, since $x_1(s)=x_1(t)\iff t\in{\langle\omega\rangle}s$, \eqref{xis} implies that $x_1(t)\neq x_2(t)$ generically, so $t\mapsto(x_1(t),x_2(t))$ defines a morphism $\chi\colon\P^1\longrightarrow X_3$. We just need to show that its degree is $1$. 
    
    Again by \eqref{xis}, this amounts to verifying that $x_2(\omega^k t)= x_2(t)$ does not hold identically for $k=1,2$. Rewriting $x_2(t)$ as $3\frac{(t-1)^4}{t^3-1}$, we see that equality holds when either $t\in\{\omega,\omega^2\}$ or $(w^k t-1)^4=(t-1)^4$. Since the polynomials appearing at the left and right-hand sides are distinct, the claim follows. Let us remark that, since $\gamma(t)=\frac{t+2}{t-1}$ is an involution, \eqref{xis} implies that $\chi(t)$ and $\chi(\gamma(t))$ give the same unordered pair.
\end{proof}
    
\begin{reml}
    By solving $x_1(t)=x_2(t)$, we find that for $t\in\{\omega,\omega^2,-2\omega,-2\omega^2,1\pm\sqrt3\}$ we obtain ``diagonal" points on $C_4$, specifically $(\infty,\infty)$ for the first two values, $(1536, 1536)$ for the second two and $(1152\pm384\sqrt3,1152\pm384\sqrt3)$ for the last two (as expected, their coordinates are the four ramification points of $\beta_4$). It is important to keep in mind that we are parametrizing $C_4$ with a multiset, so in the first two cases we are dealing with two distinct points, even if they have the same coordinates. In particular, $C_4$ has two exceptional points above each of $j=\infty$ and $j=1536$. But there are more: solving $\chi(s)=\chi(t)$, we find that the values $t=\omega(1\pm\sqrt3)$ yield the remaining ones, satisfying respectively $\chi(t)=\chi(\omega t)=(1152\pm384\sqrt3,1152\mp384\sqrt3)$.\label{nongen} 
\end{reml}
\begin{reml}
    The map $X_3\longrightarrow X(1), \ (x_1(t),x_2(t))\mapsto j_t$ has degree 12. One easily verifies that the preimages of $j_t$ are given by \begin{align*}
        \left(x_1(\gamma^i(w^k\gamma^j(t))), \ x_2(\gamma^i(w^k\gamma^j(t)))\right), \ 0\le i,j\le1, \ 0\le k\le2,
    \end{align*}so, given $t\in K$, there are four points defined over $K$ that map to $j_t$ if and only if $\omega\in K$.\label{quadr}
\end{reml}
Proposition \ref{magic} and \eqref{xis} give: \begin{align}
    C_4(K)=\biggl\{\left(1728\frac{t^3}{t^3-1},1728+192\frac{(t-1)^3}{t^2+t+1}\right), \ t\in K\cup\{\infty\}\biggl\},\label{jequicq}
\end{align}
where we treat pairs appearing with multiplicity as distinct points. Recall the notation $\rho=1+\sqrt3, \ \overline{\rho}=1-\sqrt3$, let $\widetilde K=K\setminus\{\omega,\omega^2,-2\omega,-2\omega^2,\rho,\overline{\rho}\}$ and set $j_1(t)=1728\frac{t^3}{t^3-1}$, $j_2(t)=1728+192\frac{(t-1)^3}{t^2+t+1}$. Then \eqref{jequicq} means precisely that, for $t\in K\cup\{\infty\}$, the integrals of the Weierstrass polynomials of any two cubic curves $E_t, F_t$ with $j(E_t)=j_1(t)$ and $j(F_t)=j_2(t)$ are two quartics with critical $j$-invariant $j_t$, and they are inequivalent if $t\in\widetilde K\cup\{\infty\}$. This, along with Lemma \ref{polrep}, gives a nice criterion for a (generic) quartic over $K$ to have an inequivalent equicritical ``twin":
\begin{cor}
    Let $f\in K[x]$ be a Morse quartic with critical $j$-invariant $j_{\mathrm{CV}}\neq0,1728$. Then, there exists a quartic $g\in K[x]$ with $C_f=C_g$ and $[f]\neq[g]$ if and only if the $j$-invariant of the elliptic curve $y^2=f'(x)$ is of the form $1728\frac{t^3}{t^3-1}$ for some $t\in K$.
\end{cor}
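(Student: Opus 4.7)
The plan is to read this corollary off the parametrization of $C_4(K)$ from Proposition \ref{magic} combined with the lifting result of Lemma \ref{polrep}. The key preliminary observation, implicit in \eqref{cp}, is that the coordinate $j_f$ on $\H_4\simeq\P^1$ attached to the $\PGL_2(\C)$-orbit of $[f]$ is the $j$-invariant of the quadruple $Z_f\cup\{\infty\}$, and this is precisely $j(y^2=f'(x))$ since a change of leading coefficient of $f'$ produces only a quadratic twist.

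For the forward direction, assume $g\in K[x]$ satisfies $C_g=C_f$ and $[g]\neq[f]$. Since $j_{CV}\neq0,1728$, the proof of Proposition \ref{pol} shows that distinct topological types of quartics sharing a common set of critical values correspond to distinct points on $\H_4$; hence $(j_f,j_g)$ is a non-diagonal $K$-rational point of $\H_4\underset{X(1)}{\times}\H_4$, lying in $C_4(K)$. By Proposition \ref{magic} and \eqref{jequicq} this point equals $(j_1(t),j_2(t))$ for some $t\in K\cup\{\infty\}$. Morseness of $f$ forces $j_f$ to be finite, and \eqref{a4} gives $j_f\neq1728$; since $j_1(t)=\infty$ requires $t^3=1$ while $j_1(\infty)=1728$, we must have $t\in K$ with $t^3\neq 1$, whence $j_f=1728\,t^3/(t^3-1)$.

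For the converse, pick $t\in K$ with $j_f=1728\,t^3/(t^3-1)$ and consider the $K$-point $(j_1(t),j_2(t))\in C_4(K)$. Applying Lemma \ref{polrep} to each coordinate with the common quadruple $\widehat y=C_f\cup\{\infty\}\in\U_4(K)$ yields quartics $\tilde f,g\in K[x]$ with $C_{\tilde f}=C_g=C_f$, realizing the $\PGL_2(\C)$-orbits attached to $j_1(t)$ and $j_2(t)$. Since $\tilde f$ and $f$ map to the same point on $\H_4$ and share the same set of critical values, any $\PGL_2(\C)$-element relating them must lie in $\text{Stab}_{\PGL_2(\C)}(C_f\cup\{\infty\})$; for nonelliptic $j_{CV}$ this stabilizer is a Klein four-group whose only affine element is the identity (as observed in the proof of Proposition \ref{pol}), so $[\tilde f]=[f]$. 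Finally, $[g]\neq[f]$ unless $j_1(t)=j_2(t)$, which by Remark \ref{nongen} forces $t\in\{\omega,\omega^2,-2\omega,-2\omega^2,\rho,\overline\rho\}$; evaluating $\beta_4$ via \eqref{a4} at the corresponding $j_1(t)\in\{\infty,1536,1152\pm384\sqrt3\}$ gives $j_{CV}\in\{\infty,0,1728\}$, all excluded by hypothesis.

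The only subtle step is verifying that the six exceptional values of $t$ from Remark \ref{nongen} all correspond to the excluded critical $j$-invariants $\{0,1728,\infty\}$; the remainder is a formal consequence of the parametrization of $C_4$ and the lifting of points from $\H_4$ to concrete polynomials over $K$.
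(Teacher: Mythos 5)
Your proof is correct and follows essentially the same route as the paper: the corollary is read off from the parametrization \eqref{jequicq} of $C_4(K)$ together with Lemma \ref{polrep}, using the identification of the $\H_4$-coordinate of $[f]$ with $j(y^2=f'(x))$. You merely make explicit the details the paper leaves implicit (that distinct equicritical types give distinct points of $\H_4$ for nonelliptic $j_{CV}$, and that the degenerate values of $t$ from Remark \ref{nongen} only occur over $j_{CV}\in\{0,1728,\infty\}$), and these verifications are accurate.
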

\begin{reml}
    The hypotheses that $f$ is Morse and $j_{\mathrm{CV}}$ is nonelliptic imply, respectively, that $y^2=f'(x)$ is an elliptic curve and that Lemma \ref{polrep} applies.
\end{reml}

In order to parametrize a basis for $\text{EC}_4(K)$, we need to find an equicritical pair over $K$ for each $t$ (if there is one), that is, we need to explicitly go through the procedure described in the proof of Lemma \ref{polrep} for nonelliptic $j_{\mathrm{CV}}=j_t$, while taking care of the excluded values separately. Specifically, given $t\in \widetilde K\setminus\{-2,0,1\}$, we:
\begin{enumerate}
    \item pick elliptic curves $E_t,F_t$ over $K$ with $j(E_t)=j_1(t), \ j(F_t)=j_2(t)$, yielding us quartic lifts $f^t=f_{E_t}, 
\ g^t=f_{F_t}\in K[x]$ of $j_1(t)$ and $j_2(t)$, with the notation of \eqref{normp};
    \item find the lift $\widetilde{g^t}$ of $j_2(t)$ such that $C_{\widetilde{g^t}}=C_{f^t}$, by computing $\lambda_t(g^t)$ with $\lambda_t\in\text{Aff}(K)$ the transformation mapping $F^{\mathrm{CV}}_t[2]$ to $E^{\mathrm{CV}}_t[2]$ (we will not need to pick another representative of the topological type since, as we will see, the family $\lambda_t$ is defined over $\Q(t)$).
\end{enumerate} 

The first step is easy: it is well known that the elliptic curve $E_j:y^2=x^3+\frac{3j}{1728-j}x+\frac{2j}{1728-j}$ has $j$-invariant $j$ for all $j\neq0,1728$, so we can just take
\begin{align*}
    E_t: y^2=x^3-3t^3x-2t^3\text{ and } F_t: y^2=x^3-3{\left(\frac{t+2}{t-1}\right)^3}x-2\left(\frac{t+2}{t-1}\right)^3, \ t\in\widetilde K\setminus\{-2,0,1\},
\end{align*}
corresponding to \begin{align}
    f^t(x)=3x^4-18t^3x^2-24t^3x-9t^6\text{ and }g^t(x)=3x^4-18{\left(\frac{t+2}{t-1}\right)^3}x^2-24{\left(\frac{t+2}{t-1}\right)^3}x-9\left(\frac{t+2}{t-1}\right)^6.\label{pols}
\end{align}The second step is in principle more complex, but a careful examination of the critical values for the first few negative integer values for $t$ suggests for $\lambda_t$ the form \begin{align}
    z\mapsto-\frac{t^4(t-1)^6}{3(t+2)^4}z-36t^4(t^2+t+1),\label{2map}
\end{align} which we easily verify with \textbf{SageMath} \cite{sagemath}. As anticipated, the $\lambda_t$'s turn out to be defined over $K$, so there is no need to look for a rational representative in $[g^t]$. Let us remark that it is always the case that, for two elliptic curves $E,F$ defined over $K$, isomorphic over $\overline\Q$ and with nonelliptic $j$-invariant, their $2$-torsions are related by an affine transformation defined over $K$. Indeed, any $\overline{\Q}$-isomorphism $\phi:E\longrightarrow F$ descends to an isomorphism $\lambda:E[2]\longrightarrow F[2]$ of the $2$-torsions as Galois modules, so in particular it is affine. If $\lambda$ is not defined over $K$, there is $\sigma\in G_K$ such that $\lambda^{-1}\circ{^{\sigma}}\lambda\neq I$, so in particular $\phi^{-1}\circ{^{\sigma}}\phi\in\Aut_{\overline{\Q}}(E)\setminus\langle\iota\rangle$, with $\iota$ the elliptic involution; in other words, $E$ has nontrivial automorphisms, and hence elliptic $j$-invariant.

Putting together Proposition \ref{magic}, \eqref{pols} and \eqref{2map}, we have proved:
\begin{lem}
    The collection of pairs of Morse polynomials $\e'_K=(\c_t=(f_t,g_t), t\in \widetilde K\setminus\{-2,0,1\})$ with \begin{align}\begin{split}
    &f_t=\frac{1}{3t^4}f^t(tx)+3t^2=x^4-6tx^2-8x,\\
    &g_t=\frac{1}{3t^4}\lambda_t\left(g^t\left(\frac{t+2}{t-1}x\right)\right)+3t^2=-\frac{(t-1)^2}{3}\left(x^4-6\frac{t+2}{t-1}x^2-8x\right)-8(t^2+t+1)\end{split}\label{pairs}
\end{align}
is a quasi-basis for $EC_4(K)$.\label{genericf}
\end{lem}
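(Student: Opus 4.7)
The plan is to assemble three ingredients already in place: the parametrization of $C_4(K)$ from Proposition \ref{magic}, the universal Weierstrass lifts $f^t, g^t$ of \eqref{pols}, and the explicit affine transformation $\lambda_t$ of \eqref{2map}; then verify equicriticality, inequivalence, and coverage. First, for $t\in\widetilde K\setminus\{-2,0,1\}$ I would check that both $j_1(t)$ and $j_2(t)$ lie in $K\setminus\{0,1728,\infty\}$ and are distinct: the excluded values were chosen precisely to avoid cusps, elliptic critical $j$-invariants, and the diagonal points of $C_4$ enumerated in Remark \ref{nongen}. Consequently $E_t,F_t$ are bona fide elliptic curves over $K$, and the normalized integrals $f^t=f_{E_t},\ g^t=f_{F_t}$ are quartics in $K[x]$ lifting $j_1(t)$ and $j_2(t)$ respectively in the sense of \cref{4}.

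Next I would verify the one genuinely computational input, namely that $\lambda_t(C_{g^t})=C_{f^t}$ as multisets, which is what makes $(f^t,\lambda_t\circ g^t)$ equicritical. Since both critical value sets are root loci of explicit monic cubics in $x$ with coefficients in $\Q(t)$, this reduces to a rational identity in $t$ that is routine to check symbolically, as the authors note. The fact that $\lambda_t$ is itself defined over $K$ (and not some quadratic extension) is automatic from the general remark preceding the lemma: for elliptic curves over $K$ with nonelliptic $j$-invariant, any $\overline{K}$-isomorphism descends to a $K$-affine identification of the $2$-torsion sets. Once this is in place, applying the common post-composition $y\mapsto\frac{y}{3}+3t^6$ to both $f^t$ and $\lambda_t\circ g^t$ preserves equicriticality while producing the normalized pair $(f_t,g_t)$ of \eqref{pairs}.

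To finish, I would observe that each pair $(f_t,g_t)$ is inequivalent because $\psi_4^{-1}(x_1(t))\neq\psi_4^{-1}(x_2(t))$ on $\widetilde K$, so $[f_t]$ and $[g_t]$ sit above distinct points of $\H_4$. Proposition \ref{magic} together with Lemma \ref{polrep} ensures that every $K$-point of $C_4$ above a nonelliptic non-cuspidal $j_t$ lifts to a $K$-rational equicritical pair, and that any such $\sim$-class in $EC_4(K)$ is captured by our parametrization modulo at most the $\gamma(t)=\frac{t+2}{t-1}$ swap (which just switches the ordered pair with its reverse, cf.\ the comment after the proof of Proposition \ref{magic}). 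The omitted $t$-values lie in a finite set, so they contribute only finitely many $\sim$-classes to $EC_4(K)$; hence $\e'_K$ is a quasi-basis by Definition \ref{basis}.

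The main obstacle is precisely the universal verification of the identity $\lambda_t(C_{g^t})=C_{f^t}$: the explicit $\lambda_t$ of \eqref{2map} must be guessed (say from small integer values of $t$, as the authors indicate) and then confirmed as a polynomial identity in $t$ over $\Q$. Everything else is algebraic bookkeeping around normalizations and the already-established geometry of $C_4$.
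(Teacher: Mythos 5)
Your proposal is correct and follows essentially the same route as the paper: the paper likewise obtains the pairs by combining the parametrization of $C_4(K)$ from Proposition \ref{magic} with the Weierstrass-integral lifts \eqref{pols} and the computationally verified affine transformation \eqref{2map}, then normalizes and notes that the finitely many excluded $t$-values only cost a quasi-basis. Your additional bookkeeping (checking that $j_1(t),j_2(t)$ avoid $0,1728,\infty$ and are distinct on $\widetilde K$, and that $\lambda_t$ is $K$-rational) just makes explicit what the paper leaves implicit.
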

More specifically, substituting in \eqref{jequicq} we see that the values $t=-2,0,1,\infty$ we left out correspond to the pairs $(1536,0),(0,1536),(\infty,1728),(1728,\infty)\in C_4(K)$, while Remark \ref{nongen} tells us that the other points in $C_4(K)$ which we might be missing representatives for are the eight exceptional ones, along with the two diagonal points $(1152\pm384\sqrt3,1152\pm384\sqrt3)$.

\subsection{Elliptic fibers and cusps}
\label{52}
Let us now deal with equicritical pairs lifting the remaining eight points in $C_4(K)$. We start by examining the situation above $j_{\mathrm{CV}}\in\{0,1728\}$.\begin{enumerate}
    \item $j_{\mathrm{CV}}=0$: let us deal with the points $(0,1536)$ and $(1536,0)$ first; 
    we already found a representative $(p_0)$ for the $\PGL_2(\C)$-orbit of topological types corresponding to $j=0$, and we can do the same for $j=1536$ simply by integrating the Weierstrass polynomial of any elliptic curve with $j$-invariant $1536$. Still, the affine transformation relating the critical values of the two representatives is not guaranteed to be rational (recall that this is why Lemma \ref{polrep} fails for elliptic $j_{\mathrm{CV}}$), but it happens to be so in some cases: choosing the right elliptic curve with the help of the L-functions
    and modular forms database (LMFDB) \cite{lmfdb}, we obtain the two additional equicritical pairs $\c_0=(f_0,g_0)$ and $\c_{-2}=(g_0,f_0)$ with \begin{align}
        f_0=p_0=x^4-x, \ g_0=-\frac{1}{48}x^4-\frac{1}{4}x^2+\frac{1}{6}x-\frac{1}{2}.\label{j=0}
    \end{align}
    
    Let us now look at the two exceptional points in this fiber, both represented by the coordinates $(1536,1536)$ in our parametrization: we know that, for any $\widehat y\in\U_4(K)$ such that $j(\widehat y)=j_{\mathrm{CV}}$, there are three topological types with critical values $\widehat y$ that lift $j=1536$. We have just constructed a rational one $[g_0]$; the other two will be given by $[\lambda(g_0)]$ and $[\lambda^{(2)}(g_0)]$ with $\lambda$ the nontrivial affine transformation preserving $C_{g_0}$ (we know by Corollary \ref{fixp} that these are distinct topological types), which, since $C_{g_0}=C_{p_0}$, is simply the multiplication by $\omega$. As $\Q(\omega)$ is clearly the minimal field of definition of any conjugate of $\lambda$, we get two additional pairs \begin{align}
        \c_{-2\omega}=(g_0,\omega g_0)=\overline{\c_{-2\omega^2}}\label{j0double}
    \end{align}if and only if $\omega\in K$ (and no other, since the pairs $(g_0,\omega^2 g_0)$ and $(\omega g_0,\omega^2 g_0)$ belong to the orbit of one of those appearing in \eqref{j0double}).
    \vspace{1mm}
    
    \item $j_{\mathrm{CV}}=1728$: we consider the diagonal points $(1152\pm384\sqrt3,1152\pm384\sqrt3)$ first; they correspond to the values $t=\rho,\overline{\rho}$ in our parametrization, so the quartics $f_{\rho},f_{\overline{\rho}}$ (following the notation of \cref{51}) are lifts for $j=1152+384\sqrt3$ and $j=1152-384\sqrt3$ respectively. As we just saw in the case $j_{\mathrm{CV}}=0$, to find the other lifts making up the pairs we need to apply to $f_{\rho}$ and $f_{\overline{\rho}}$ the affine transformations preserving their respective critical value sets. \textbf{SageMath} \cite{sagemath} shows that $f_{\rho}=x^4-6\rho^3x^2-8\rho^3x$ satisfies $C_{f_{\rho}}=(-\upsilon+C,C,\upsilon+C)$ for $\upsilon=72\sqrt{362\sqrt3 + 627}$ and $C=-720\sqrt3 - 1248$. As $f_{\rho}$ and $f_{\overline{\rho}}$ are Galois conjugates, so are $C_{f_{\rho}}$ and $C_{f_{\overline{\rho}}}$, and hence we get the additional pairs
    \begin{align} 
        \c_{\rho}=(f_{\rho}, \ -f_{\rho}+2C), \ \c_{\overline{\rho}}=(f_{\overline{\rho}}, -f_{\overline{\rho}}+2\overline{C})\label{j=1728}
    \end{align}if and only if $\sqrt3\in K$. 
    
    The previous paragraph enables us to also find pairs lifting the four exceptional points in this fiber, corresponding to $t=\omega\rho,\omega^2\rho,\omega\overline{\rho},\omega^2\overline{\rho}$, as their orbits in $\text{EC}_4(K)$ are necessarily obtained by taking one orbit from each of the pairs in \eqref{j=1728}. Again with the help of \textbf{SageMath} \cite{sagemath}, we find that the linear map sending $C_{f_{\overline{\rho}}}$ to $C_{f_{\rho}}$ is $z\mapsto iRz-iR\overline{C}$, with $R=362+209\sqrt3$, yielding the four additional pairs\begin{align}
        \c_{\omega\rho}=(f_{\rho}, \ iRf_{\overline{\rho}}-iR\overline{C})=\overline{\c_{\omega\overline{\rho}}}, \ \c_{\omega^2\rho}=(f_{\rho}, \ iR(-f_{\overline{\rho}}+2\overline{C})-iR\overline{C})=\overline{\c_{\omega^2\overline{\rho}}}\label{exc1728}
    \end{align} if and only if $\omega,\sqrt3\in K$.
    Note that the other four possible combinations do not appear in the basis, as they all belong to the $\PGL_2(\C)$-orbit of one of those in \eqref{exc1728}.
\end{enumerate}

Let us now look above $j_{\mathrm{CV}}=\infty$. As anticipated, the two cusps $j=\infty,1728$ of $\H_4$ correspond to the $\PGL_2(\C)$-orbits of topological types $(f)$ of non-Morse quartics; indeed, there are two of these: if $f$ branches over less than three points, then either $f'$ has a double root---in which case $f$ is of the form $A(x-a)^3(x-b)+B, \ (A,B)\in K^*\times K, \ a,b\in\overline{\Q}$ (\textit{type 1})---or $f$ takes the same value at two of the distinct roots of $f'$, i.e. $f=A(x-a)^2(x-b)^2+B$ for $A,B,a,b$ as above (\textit{type 2}) (we can assume $a\neq b$, as we otherwise get the $\text{Aff}(K)$-orbit of a fourth power, which is, above each choice for the critical value $B$, the class of a unique topological type).
When we mod out by the outer action of $\text{Aff}(\overline{\Q})$, \textit{type 1} and \textit{type 2} are clearly distinct topological types. Observe that if a multiset $\{w,w,x\}$ is fixed by an affine transformation, this must necessarily fix $w$ as the element of multiplicity $2$, and hence also $x$, so it must be the identity: therefore, there are no equicritical non-Morse quartics arising from the same outer $\text{Aff}(\overline{\Q})$-orbit. In other words, the two exceptional points with coordinates $(\infty,\infty)$ in our parametrization, which correspond to $t=\omega,\omega^2$, do not lift to an inequivalent equicritical pair. 

All we have to do now is find, if it exists, an equicritical pair $(f_1,g_1)$ with $f$ of \textit{type 1} and $g$ of \textit{type 2}. It is not hard to see that \begin{align}
    (f_1,g_1)=(-3x^3(x+2), \ x^2(x+3)^2),\label{j=inf}
\end{align}with critical values $\{0,0,\frac{81}{16}\}$, works, and this gives the last two ordered pairs $\c_1=(f_1,g_1)$ and $\c_{\infty}=(g_1,f_1)$. Finally, the parametrization part of Theorem \ref{mainthm} follows from Lemma \ref{genericf} together with \eqref{j=0}, \eqref{j0double}, \eqref{j=1728}, \eqref{exc1728} and \eqref{j=inf}, while the last part follows from Remark \ref{quadr}.

\subsection{Application to Weyl sums mod $p^2$, and one example}\label{5.3}

As we anticipated in \cref{1}, our main result has an application in the theory of exponential sums attached to polynomials. Specifically, given $a\in\Z$ and a prime power $q$, in \cite{ks} Kowalski and Soundararajan consider the mod $q$ Weyl sum\begin{align*}
    W_{f}(a,q)=\frac{1}{\sqrt q}\sum_{x\in\Z/q\Z}e\left(\frac{af(x)}{q}\right)
\end{align*}attached to $f\in\Z[x]$, investigating pairs $(f,g)$ satisfying $W_f(a,q)=W_g(a,q)$ (or other weaker variants). For $q=p^2$ they observe \cite[Remark 1.10]{ks} that, when $(a,p)=1$, one has\begin{align*}
    W_{f}(a,q)=\sum_{\substack{f'(v)=0\\v\in\F_p}}e\left(\frac{af(v)}{p}\right),
\end{align*} so equicritical polynomials $f,g\in\Z[x]$ whose leading coefficients are invertible mod $p$ satisfy $W_f(a,p^2)=W_g(a,p^2)$ for $(a,p)=1$. Therefore, following its notation, Theorem \ref{mainthm} implies:
\begin{cor}
    Let $p>3$ be a prime. For any $t\in\Z$ such that $p\nmid(t-1)$, the integral quartic polynomials $f(x)=3f_t(x)$ and $g(x)=3g_t(x)$ are linearly inequivalent and satisfy $W_f(a,p^2)=W_g(a,p^2)$ whenever $a$ is coprime to $p$.
\end{cor}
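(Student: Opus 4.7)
The plan is to apply the Kowalski--Soundararajan evaluation
\[
W_h(a, p^2) \;=\; \sum_{\substack{v \in \F_p \\ h'(v) \equiv 0 \,(\bmod\, p)}} e\bigl(a h(v)/p^2\bigr),
\]
recalled in the paragraph preceding the corollary, to both $h = f$ and $h = g$, and to show that both sums collapse to the same expression indexed by common critical values. For the setup, Theorem \ref{mainthm} applied with $K = \Q$ yields that $(f_t, g_t)$ is an equicritical pair over $\Q$ for every integer $t \notin \{-2, 0, 1\}$; multiplying both polynomials by the common nonzero rational $c = 3(t+2)^4$ scales all critical values uniformly by $c$, so equicriticality is preserved, while inspection of Lemma \ref{genericf} shows that this scaling clears every denominator of $g_t$, placing $f = c f_t$ and $g = c g_t$ in $\Z[x]$ with leading coefficients $3(t+2)^4$ and $-t^4(t-1)^6$. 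Under $p > 3$ and $p \nmid t(t-1)$, together with the implicit $p \nmid (t+2)$ needed to keep $f$ quartic mod $p$, both leading coefficients are units in $\Z_p$ and $f, g$ remain Morse mod $p$.

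The key step is to show that for each common critical value $y_i \in \overline{\Q_p}$ of the pair, the associated critical point $w_i^h$ is the unique double root of $h(x) - y_i$, so Galois-equivariance yields $w_i^h \in \Q_p \iff y_i \in \Q_p$ uniformly for $h \in \{f, g\}$. Under the unit leading coefficient assumption this equivalence upgrades to $\Z_p$-rationality, and reduction mod $p$ identifies the critical points of $f \bmod p$ and $g \bmod p$ lying in $\F_p$ with the same subset $\{y_i : y_i \in \Z_p\}$ of common critical values. A Taylor expansion of $h$ around $w_i^h \in \Z_p$, using $h'(w_i^h) = 0$ and $v \equiv w_i^h \pmod p$, then gives $h(v) \equiv y_i \pmod{p^2}$, so both $W_f(a, p^2)$ and $W_g(a, p^2)$ equal $\sum_{i : y_i \in \Z_p} e(a y_i / p^2)$.

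The main obstacle is the Galois-uniqueness step: it crucially depends on $f$ and $g$ being Morse modulo $p$, so that each critical value has a single associated critical point and the correspondence between $\F_p$-critical points of the two polynomials goes through. This nondegeneracy boils down to the discriminant of the common critical-value cubic (an element of $\Z[y]$ since $\theta_4$ is defined over $\Q$) being nonzero mod $p$, a routine auxiliary check compatible with the stated hypotheses.
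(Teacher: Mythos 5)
Your overall route is the paper's own: the paper's entire ``proof'' is the paragraph preceding the corollary, namely the Kowalski--Soundararajan stationary-phase identity reducing $W_h(a,p^2)$ to a sum over the critical points of $h$ mod $p$, plus the bare assertion that integral equicritical pairs with unit leading coefficients mod $p$ have equal Weyl sums, plus Theorem \ref{mainthm}. You are supplying the justification of that middle assertion, and your generic-case argument is the right one: uniqueness of the double root of $h(x)-y_i$ for a Morse quartic, hence Galois descent between critical points and critical values, integrality from the unit leading coefficient, Hensel lifting, and the Taylor expansion giving $h(v)\equiv y_i\pmod{p^2}$, so that both sums collapse to $\sum_{y_i\in\Z_p}e(ay_i/p^2)$.

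The gap is in your final paragraph. The nondegeneracy you need is separability of $f'$ and $g'$ modulo $p$ (so that each $\F_p$-critical point Hensel-lifts uniquely and the point/value correspondence survives reduction), and this is \emph{not} implied by the stated hypotheses: one computes $\operatorname{disc}(f_t'/4)=108\,t^6(t-1)(t^2+t+1)$, and the analogous discriminant for $g_t$ carries the factor $(t+2)(t^2+t+1)$, so primes dividing $t^2+t+1$ or $t+2$ are degenerate and are not excluded by $p>3$, $p\nmid t(t-1)$. (Note also that what controls this is the discriminant of the critical-\emph{point} cubics, not of the critical-value cubic you invoke; the latter vanishing mod $p$ does not by itself break the argument.) Your parenthetical ``implicit $p\nmid(t+2)$'' is in fact a genuine counterexample to the corollary as stated rather than a harmless implicit hypothesis: for $t=3$, $p=5$ every coefficient of $f=1875\,f_3$ is divisible by $5^4$, so $W_f(a,25)=5$, whereas $g\equiv-t^4(t-1)^6x^4\pmod{25}$ gives $W_g(a,25)=1$. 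So either the hypotheses must be strengthened to $p\nmid t(t-1)(t+2)(t^2+t+1)$, or a separate argument must be given at the degenerate primes; the claim that the check is ``routine'' and ``compatible with the stated hypotheses'' does not close this, and the step where your proof (and the paper's) would fail is precisely the identification of the $\F_p$-critical points of $f\bmod p$ and $g\bmod p$ with the common $\Z_p$-rational critical values when $\bar f'$ or $\bar g'$ acquires a repeated root.
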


Finally, let us test Theorem \ref{mainthm} with an example: take, for instance, $t=42$. Then, the simple implementation 
\begin{verbatim}
from sage.all import QQbar, polygen, sqrt

def critical_values_exact(f):
    x = f.parent().gen()
  
    fp = f.derivative()
    critical_points = fp.roots(multiplicities=False)
    critical_values = []
    
    for cp in critical_points:
        cv = f(cp)
        cv_radical = cv.radical_expression()
        critical_values.append((cv==cv_radical))

    return critical_values

def main():
    x = polygen(QQbar)
    t = 42
    v = t**4*(t-1)**3/(t+2)
    f = x**4 - 6*t**3*x**2 - 8*t**3*x
    g = -(t-1)**3*v/(3*(t+2)**3)*x^4 + 2*v*x**2 + 8/3*v*x - 8*t**4*(t**2+t+1)
    
    print("Polynomial f(x):") 
    print(f)
    print("Polynomial g(x):") 
    print(g)
    
    critical_values_f = critical_values_exact(f)
    critical_values_g = critical_values_exact(g)
    
    print("\nCritical values of f:")
    for i, cv in enumerate(critical_values_f, 1):
        print(f"Critical value {i}:\n {cv}")
        print()
        
    print("\nCritical values of g:")
    for i, cv in enumerate(critical_values_g, 1):
        print(f"Critical value {i}:\n {cv}")
        print()
        
if __name__ == "__main__":
    main()
\end{verbatim}in \textbf{SageMath} \cite{sagemath} returns the same critical values both in numerical and radical form:
\begin{verbatim}
Polynomial f(x): 
x^4 - 444528*x^2 - 592704*x
Polynomial g(x): 
-307935007631307/234256*x^4 + 107230600008/11*x^2 + 142974133344/11*x - 44982677376

Critical values of f:
Critical value 1: 
-4.912195494217215?e10 == -1/2*(482155633265041602566848512*I*sqrt(74087) +
4464098178074162551805159964672)^(1/3)*(I*sqrt(3) + 1) - 
135597252859697147904*(-I*sqrt(3) + 1)/(482155633265041602566848512*I*sqrt(74087) +
4464098178074162551805159964672)^(1/3) - 32934190464

Critical value 2: 
197568.1975316542? == (482155633265041602566848512*I*sqrt(74087) +
4464098178074162551805159964672)^(1/3) + 
271194505719394295808/(482155633265041602566848512*I*sqrt(74087) + 
4464098178074162551805159964672)^(1/3) - 32934190464

Critical value 3: 
-4.968081401802539?e10 == -1/2*(482155633265041602566848512*I*sqrt(74087) +
4464098178074162551805159964672)^(1/3)*(-I*sqrt(3) + 1) - 
135597252859697147904*(I*sqrt(3) + 1)/(482155633265041602566848512*I*sqrt(74087) +
4464098178074162551805159964672)^(1/3) - 32934190464

Critical values of g:
Critical value 1: 
-4.912195494217215?e10 == -1/2*(482155633265041602566848512*I*sqrt(74087) +
4464098178074162551805159964672)^(1/3)*(I*sqrt(3) + 1) - 
135597252859697147904*(-I*sqrt(3) + 1)/(482155633265041602566848512*I*sqrt(74087) +
4464098178074162551805159964672)^(1/3) - 32934190464

Critical value 2: 
-4.968081401802539?e10 == -1/2*(482155633265041602566848512*I*sqrt(74087) +
4464098178074162551805159964672)^(1/3)*(-I*sqrt(3) + 1) - 
135597252859697147904*(I*sqrt(3) + 1)/(482155633265041602566848512*I*sqrt(74087) +
4464098178074162551805159964672)^(1/3) - 32934190464

Critical value 3: 
197568.1975316542? == (482155633265041602566848512*I*sqrt(74087) +
4464098178074162551805159964672)^(1/3) + 
271194505719394295808/(482155633265041602566848512*I*sqrt(74087) +
4464098178074162551805159964672)^(1/3) - 32934190464
\end{verbatim}as expected.

\newpage
\bibliography{biblio}
\bibliographystyle{plain} 
\end{document}